\documentclass[12pt]{article} 

\usepackage{color}
\usepackage[margin=1in]{geometry} 
\usepackage{graphicx} 
\usepackage{amsmath} 
\usepackage{amsfonts} 
\usepackage{amsthm} 
\usepackage{mathrsfs}
\usepackage{mathtools}
\usepackage{enumitem}
\usepackage{hyperref}
\newtheorem{theorem}{Theorem}[section]
\newtheorem{lemma}[theorem]{Lemma}

\newtheorem{corollary}[theorem]{Corollary}

\newtheorem{definition}[theorem]{Definition}
\newtheorem{example}[theorem]{Example}
\newtheorem{remark}[theorem]{Remark}
\newcommand{\de}{\mathrm{d}}
\DeclareMathAlphabet{\mathpzc}{OT1}{pzc}{m}{it}

\providecommand{\keywords}[1]{\textbf{\textit{Keywords: }}#1}


\title{Weak Solutions to Vlasov-McKean Equations under Lyapunov-Type Conditions}

\author{Sima Mehri\footnotemark[1]\ \footnotemark[2]\ \footnotemark[3] \and Wilhelm Stannat\footnotemark[1]}

\begin{document}

\maketitle
\renewcommand{\thefootnote}{\fnsymbol{footnote}}
\footnotetext[1]{Institut f\"ur Mathematik, Technische Universit\"at Berlin, D-10623 Berlin, Germany}
\footnotetext[2]{Department of Mathematical Sciences, Sharif University of Technology, Tehran, Iran}
\footnotetext[3]{The work of this author was supported by the Hilda Geiringer Scholarship awarded by the Berlin Mathematical School}
\renewcommand{\thefootnote}{\arabic{footnote}}

\begin{abstract}
	We present a Lyapunov type approach to the problem of existence and uniqueness of general 
	law-dependent stochastic differential equations. In the existing literature most results concerning 
	existence and uniqueness are obtained under regularity assumptions of the coefficients w.r.t the 
	Wasserstein distance. Some existence and uniqueness results for irregular coefficients have been 
	obtained by considering the total variation distance. Here we extend this approach to the control of 
	the solution in some weighted total variation distance, that allows us now to derive a 
	rather general weak uniqueness result, merely assuming measurability and certain integrability on 
	the drift coefficient and some non-degeneracy on the dispersion coefficient. We also present an 
	abstract weak existence result for the solution of law-dependent stochastic differential equations with 
	merely measurable coefficients, based on an approximation with law-dependent stochastic differential 
	equations with regular coefficients under Lyapunov type assumptions.
\end{abstract}
\keywords{Vlasov-McKean equations; Girsanov theorem; existence and uniqueness of weak solution; Lyapunov method; weighted total variation.}

\section{Introduction}

The purpose of this paper is to provide general existence and uniqueness results for the solution of 
Vlasov-McKean equations, and more general law-dependent stochastic differential equations, using a 
Lyapunov approach. The existence and uniqueness of solutions of Vlasov-McKean equations under global 
Lipschitz conditions is well-known. Surprisingly, uniqueness fails under local Lipschitz assumptions 
(see \cite{scheutzow1987uniqueness}). However, in these counterexamples, the noise is degenerate 
(in fact zero). As the following Example of uniqueness with merely measurable coefficients shows, the situation changes, if the noise becomes non-degenerate. 

\begin{example}
	On the complete probability space $(\Omega,\mathcal{F},(\mathcal{F}_t)_{t\geq 0},\mathbb{P})$ with real valued $\left(\mathcal{F}_t\right)_{t\geq 0}$-Wiener process $\left(W_t\right)_{t\geq 0}$ on $\mathbb{R}$, consider the following Vlasov-McKean equation 
	\begin{equation}\label{example-equ}
	\begin{dcases} 
	\de X_t=\mathbb{E}\left(h(X_t)\right)\de t+\de W_t 
	\\ X_0=\xi   
	\end{dcases}
	\end{equation}
	with measurable $h$ satisfying the growth condition $\left\lvert h(x)\right\rvert 
	\leq Ce^{\frac{x^2}{2T}}$ for some $T > 0$. Let $\mu_0 :=\mathbb{P}\circ \xi^{-1}$ be absolutely continuous with continuous differentiable density, and define
	\begin{align*}
	\phi_h(t,x) 
	& := \int_{\mathbb{R}}\int_{\mathbb{R}}\frac{1}{\sqrt{2\pi t}}h(x_0 + x + w)e^{-\frac{w^2}{2t}}\de w\mu_0(\de x_0)  \\ 
	& = \int_{\mathbb{R}}\int_{\mathbb{R}}\frac{1}{\sqrt{2\pi t}}h(x_0 + w)e^{-\frac{(w-x )^2}{2t}}\de w\mu_0(\de x_0)\, . 
	\end{align*}
	Then for $t<T$, $x\mapsto\phi_h(t,x)$ is continuous differentiable, hence locally Lipschitz continuous. 
	Let $X_t=\xi+g(t)+W_t$ be a solution of \eqref{example-equ}, then
	\begin{align*}
	g^\prime(t)\de t + \de W_t 
	& = \de X_t=\mathbb{E}\left( h(X_t)\right)\de t + \de W_t \\ 
	& = \mathbb{E}\left(h(\xi+g(t)+W_t)\right)\de t+\de W_t \\ 
	& = \int_{\mathbb{R}}\int_{\mathbb{R}}\frac{1}{\sqrt{2\pi t}}h(x_0+g(t)+w)e^{-\frac{w^2}{2t}}\de  
	w\mu_0(\de x_0)\de t+\de W_t \\ 
	& = \phi_h(t,g(t)) \de t + \de W_t.
	\end{align*}
	So $g:[0,T)\to \mathbb{R}$ is the unique solution to the equation $g^\prime(t)=\phi_h(t,g(t))$, with  
	initial value $g(0)=0$. Therefore equation \eqref{example-equ} has a unique strong solution on $[0,T)$. 
\end{example} 

Hence there is a considerable interest in relaxing the assumptions on the coefficients of 
Vlasov-McKean equations. Strong well-posedness of Vlasov-McKean equation with H\"older drift and Lipschitz 
dispersion coefficient has been obtained in \cite{de2019strong}. Strong existence and uniqueness of solutions 
to the Vlasov-McKean equation under one-sided Lipschitz continuity for the drift and Lipschitz 
continuous dispersion coefficient have been obtained in \cite{dos2019freidlin}. 
The paper \cite{wang2018distribution} considers strong well-posedness of distribution dependent 
stochastic differential equations with one-sided Lipschitz continuous drift and Lipschitz-continuous 
dispersion coefficients, \cite{huang2019nonlinear} generalizes the latter result to path-distribution dependent stochastic differential equations. 

Weak existence and strong uniqueness of solutions to the Vlasov-McKean equation with continuous 
coefficients have been obtained with the help of a Lyapunov method in \cite{hammersley2018mckean}. 
The recent preprint \cite{mishura2016existence} proves weak and strong well-posedness of the solutions of 
Vlasov-McKean equations under non-degeneracy assumptions on the noise term with even non-regular drift of at 
most linear growth. 

Existence and uniqueness of weak solutions of Vlasov-McKean equations have been obtained in 
\cite{lacker2018strong}, with regularity of the coefficients w.r.t. the total variation distance. 
\cite{bauer2018strong} obtains existence and uniqueness of weak and strong solutions of Vlasov-McKean 
equations with additive noise and drift coefficients that can be decomposed into bounded measurable 
part and a part that is Lipschitz continuous w.r.t. the Kantorovich distance. 

The paper \cite{huang2018distribution} contains an existence result of a weak solution of a 
distribution-dependent stochastic differential equation with merely measurable coefficients 
based on an approximation with stochastic differential equations with Lipschitz continuous 
coefficients. This result requires uniform boundedness of the diffusion term. 

In the present paper now, we will extend the result for the existence of weak solutions to 
Vlasov-McKean equations with measurable coefficients and uniformly non-degenerate and merely integrable 
diffusion matrix (see the Theorem \ref{abstractexistence}). The abstract conditions in this theorem 
can be verified with the help of a Lyapunov type growth condition on the coefficients in Theorem 
\ref{theorem4-1}. Sufficient conditions, in terms of the coefficients only, are presented in Corollary 
\ref{theorem4}. 

We also obtain a corresponding uniqueness result for weak solutions of functional law-dependent 
stochastic differential equations under Lyapunov type growth conditions on the coefficients (see 
Corollary \ref{theorem2-1}), based on an abstract stability result for weak solutions w.r.t. a weighted total 
variation distance (see Theorem \ref{prop1}). Two sets of sufficient conditions in terms of 
the coefficients are presented in Example \ref{ex2.6}. Our uniqueness results generalize the 
corresponding result obtained in \cite{mishura2016existence} not only w.r.t. the general law-dependence 
but also w.r.t. the more general growth conditions. In \cite{mishura2016existence}, only linear growth 
is allowed. Stability results for Vlasov-McKean equations w.r.t. weighted total variation distances 
have been obtained previously in the references \cite{bogachev2016distances,manita2015uniqueness}, 
using an analytic approach, that cannot, however, cover general functional law-dependent
stochastic differential equations considered in the present work.

\section{Uniqueness Result}
Let $\mathcal{M}$ be the space of signed measures on $\left(\mathbb{R}^d,\mathcal{B}(\mathbb{R}^d)\right)$. Given a measurable function $\phi:\mathbb{R}^d\to(0,\infty)$, we define the $\phi$-weighted total variation of $\mu\in\mathcal{M}$ by
\[\left\lVert \mu  \right\rVert_\phi:=\int_{\mathbb{R}^d}\phi(y)\left\lvert \mu \right\rvert(\de y)\] 
Here $\left\lvert \mu\right\rvert$ denotes the total variation measure associated with $\mu$. For a continuous function $\phi$, this norm is lower semi-continuous with respect to the weak topology by the following Lemma.

\begin{lemma} 
	\label{lemma}
	Let $\phi:\mathbb{R}^d\to(0,\infty)$ be continuous and assume that the sequence of signed measures $\left(\mu_n\right)_{n\in\mathbb{N}}$ in $\mathcal{M}$ converges weakly to the measure $\mu$ and assume that $\phi\in \mathcal{L}^1 (|\mu |)$. Then
	\[
	\left\lVert \mu  \right\rVert_\phi\leq \liminf_{n\to\infty}\left\lVert \mu_n \right\rVert_\phi. 
	\]
\end{lemma}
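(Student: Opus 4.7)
The plan is to reduce lower semicontinuity to the variational formula
\[
\|\mu\|_\phi \;=\; \sup\Bigl\{\textstyle\int_{\mathbb{R}^d} f\,\de\mu \;:\; f\in C_b(\mathbb{R}^d),\ |f|\leq\phi\Bigr\},
\]
valid for every $\mu\in\mathcal{M}$ with $\phi\in\mathcal{L}^1(|\mu|)$. Once this is granted, the lemma follows immediately: each admissible $f$ is bounded and continuous, so weak convergence gives $\int f\,\de\mu=\lim_n\int f\,\de\mu_n$, while $\int f\,\de\mu_n\leq\int\phi\,\de|\mu_n|=\|\mu_n\|_\phi$; taking the supremum over $f$ yields $\|\mu\|_\phi\leq\liminf_n\|\mu_n\|_\phi$.

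To establish the variational formula the only nontrivial direction is $\|\mu\|_\phi\leq\sup\{\cdot\}$. I would first truncate the weight: for $\phi_N:=\phi\wedge N$, which is continuous and bounded, monotone convergence together with $\phi\in\mathcal{L}^1(|\mu|)$ gives $\|\mu\|_{\phi_N}\uparrow\|\mu\|_\phi$, and any $f\in C_b$ with $|f|\leq\phi_N$ is admissible for $\phi$ as well. So it suffices to approximate $\|\mu\|_{\phi_N}$ from below by $\int f\,\de\mu$ with $f\in C_b$, $|f|\leq\phi_N$. For fixed $\varepsilon>0$, take the Jordan--Hahn decomposition $\mu=\mu^+-\mu^-$ with $\mu^\pm$ finite positive measures supported on disjoint Borel sets $A^\pm$. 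Since $\mathbb{R}^d$ is Polish, the finite measures $\phi_N\mu^\pm$ are inner regular, so one can choose disjoint compact sets $K^\pm\subset A^\pm$ with $\int_{A^\pm\setminus K^\pm}\phi_N\,\de\mu^\pm<\varepsilon$. Urysohn's lemma then produces a continuous $h\colon\mathbb{R}^d\to[-1,1]$ with $h\equiv 1$ on $K^+$ and $h\equiv -1$ on $K^-$; the function $f:=\phi_N h$ lies in $C_b$ with $|f|\leq\phi_N$, and splitting $\int f\,\de\mu$ over the four pieces $K^+$, $A^+\setminus K^+$, $K^-$, $A^-\setminus K^-$ yields $\int f\,\de\mu\geq\|\mu\|_{\phi_N}-4\varepsilon$, as needed.

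The delicate step is precisely this construction of the approximating test function $f$: it combines inner regularity of the finite positive measures $\phi_N\mu^\pm$ (to localize their mass on disjoint compacts $K^\pm$) with a Urysohn-type extension (to paste a continuous sign structure onto the continuous envelope $\phi_N$). Everything else, namely the truncation $\phi_N\uparrow\phi$ and the deduction of lower semicontinuity from the variational formula, reduces to routine applications of monotone convergence and the defining property of weak convergence against $C_b$ test functions.
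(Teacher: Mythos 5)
Your proof is correct and follows essentially the same route as the paper: truncate the weight $\phi$ at a finite level, use the Hahn/Jordan decomposition to reduce to approximating the sign of $\mu$ by a continuous function with values in $[-1,1]$, and then pass to the limit by testing weak convergence against the resulting bounded continuous function dominated by $\phi$. The only difference is cosmetic: you package the argument as a $C_b$-duality formula for $\left\lVert\mu\right\rVert_\phi$ and make explicit, via inner regularity and Urysohn's lemma, the construction of the continuous approximant whose existence the paper simply asserts.
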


\begin{proof}
	Using the Hahn decomposition theorem we can find a measurable subset $A\in \mathcal{B}(\mathbb{R}^d)$ 
	such that $|\mu| = \mu_A - \mu_{A^c}$, where $\mu_A (B) = \mu (B\cap A)$ and  
	$\mu_{A^c} (B) = \mu (B\cap A^c)$, $B\in\mathcal{B} (\mathbb{R}^d)$, are finite nonnegative Borel 
	measures. Let $\varepsilon > 0$ be arbitrary. Since $\phi\in \mathcal{L}^1 (|\mu |)$ we can find 
	$R > 0$ such that 
	\[
	\left\lVert \mu  \right\rVert_\phi 
	\leq \int_{\mathbb{R}^d}\left(\phi (y)\wedge R\right) 
	\left(\mathbf{1}_A(y)-\mathbf{1}_{A^c}(y)\right)\mu (\de y)+\varepsilon . 
	\]
	Since $(\phi\wedge R)\, \de\mu$ is a finite Borel measure we can find a continuous function  
	$\psi : \mathbb{R}^d \to [-1,1]$ satisfying 
	$$ 
	\int_{\mathbb{R}^d}\left(\phi (y)\wedge R\right) 
	\left(\mathbf{1}_A(y)-\mathbf{1}_{A^c}(y)\right)\mu (\de y) 
	\le \int_{\mathbb{R}^d}\left(\phi (y)\wedge R\right) \psi (y)\mu (\de y) +\varepsilon \, . 
	$$ 
	Consequently,  
	\begin{align*}
	\left\lVert \mu  \right\rVert_\phi 
	& \leq \int_{\mathbb{R}^d}\left(\phi(y)\wedge R\right)\psi(y)\mu (\de y)+2\varepsilon 
	\leq \lim_{n\to\infty}\int_{\mathbb{R}^d}\left(\phi(y)\wedge R\right)\psi (y) 
	\mu_n (\de y) + 2\varepsilon \\ 
	& \leq\liminf_{n\to\infty}\left\lVert \mu_n \right\rVert_\phi + 2\varepsilon.
	\end{align*}
	Since $\varepsilon > 0$ is arbitrary, this implies the assertion. 
\end{proof}

Fix $T,\tau>0$. Let $\mathfrak{M}$ be the Borel $\sigma$-algebra induced by the weak topology on $\mathcal{M}$. Let us define
\[\mathcal{M}_T:=\left\lbrace \mu: [-\tau,T]\to\mathcal{M}; \mu \text{ is } \mathcal{B}\left([-\tau,T]\right)/\mathfrak{M}\text{-measurable} \right\rbrace.\] 

Let $(W_t)_{t\geq 0}$ be the standard Brownian motion on $\mathbb{R}^{d_1}$. We consider the nonlinear equation
\begin{equation}\label{equ1}
\begin{dcases}\de X_t=b(t,X,\mu)\de t+\sigma(t,X)\de W_t,\quad t\in [0,T],\\X_t=\xi_t, \quad t\in[-\tau,0],
\\\mu\in \mathcal{M}_T, \mu_s=\mathcal{L}(X_s), \text{ where }\mathcal{L}(X_s) \text{ denotes the law of }X_s, s\in[-\tau,T]\end{dcases}
\end{equation}
with initial condition $\xi\in C([-\tau,0];\mathbb{R}^d)$, independent of $(W_t)_{t\geq 0}$, where $b\equiv \sigma \tilde{b}$ and
\[\begin{dcases}\tilde{b}:[0,T]\times C([-\tau,T],\mathbb{R}^d)\times \mathcal{M}_T\to \mathbb{R}^{d_1},\\\sigma:[0,T]\times C([-\tau,T],\mathbb{R}^d)\to \mathbb{R}^{d\times d_1}\end{dcases}\]
are measurable functions and adapted, i.e. $\tilde{b}(t,x,\mu)$ and $\sigma(t,x)$ depend only on the path of $x$ and $\mu$ on $[-\tau,t]$. This equation is called a Vlasov-McKean equation. 
\begin{definition}\label{definition}
	We say that equation \eqref{equ1} has a weak solution on $[0,T]$ with initial distribution $\Xi$ on $C([-\tau,0],\mathbb{R}^d)$ if there exist a probability space $\left(\Omega,\mathcal{F},\left(\mathcal{F}_t\right)_{t\geq 0},\mathbb{P}\right)$, an $\left(\mathcal{F}_t\right)_{t\geq 0}$-Wiener process $(W_t)_{t\geq 0}$ on $\mathbb{R}^{d_1}$, an $\mathcal{F}_0$-measurable random variable $\xi \in C([-\tau,0],\mathbb{R}^d)$ with the law $\Xi$, and an $\left(\mathcal{F}_t\right)_{t\geq 0}$-adapted stochastic process $X\in C([-\tau,T],\mathbb{R}^d)$ such that 
	\begin{equation}
	\begin{dcases}X_t=\xi(0)+ \int_0^t b(s,X,\mu)\de s+\int_0^t\sigma(s,X)\de W_s,\quad t\in [0,T],\\X_t=\xi_t, \quad t\in[-\tau,0],
	\\\mu\in \mathcal{M}_T, \mu_s=\mathcal{L}(X_s),\end{dcases}
	\end{equation}
	which requires that the integrals are well defined, i.e.,
	\begin{equation}\label{integsigmab}
	\int_0^T\left\lvert b(s,X,\mu)\right\rvert+ \left\lvert\sigma(s,X)\right\rvert^2\de s<\infty, \quad \mathbb{P}\text{-a.s.}
	\end{equation}
\end{definition}
\begin{remark}
	Note that by Levy's theorem on characterization of Brownian motion, for any $\left(\mathcal{F}_t\right)_{t\geq 0}$-Wiener process $(W_t)_{t\geq 0}$, $W_t-W_s$ is independent of $\mathcal{F}_s$. Specially $(W_t)_{t\geq 0}$ is independent of $\mathcal{F}_0$, that means in Definition \ref{definition}, $\xi$ is in fact independent of $(W_t)_{t\geq 0}$.
\end{remark}
We will first state an abstract uniqueness result for the weak solution to the Vlasov-McKean equation 
\eqref{equ1} in the following theorem, that is based on an estimate of the distance of the laws of two 
weak solutions with different drift and same dispersion coefficient w.r.t. the weighted total variation 
distance introduced above. 

\begin{theorem} 
	\label{prop1}
	Suppose that equation
	\begin{equation}\label{equ0}
	\begin{dcases}\de X^0_t=\sigma(t,X^0)\de W_t,\quad t\in[0,T],\\X^0_t=\xi_t, \quad t\in[-\tau,0],\end{dcases}
	\end{equation}
	has a unique strong solution on the probability space $\left(\Omega,\mathcal{F},\left(\mathcal{F}_t\right)_{t\geq 0},\mathbb{P}\right)$ for some $\mathcal{F}_0$-measurable random variable $\xi\in C([-\tau,0],\mathbb{R}^d)$. Let $\tilde{b}_1,\tilde{b}_2:[0,T]\times C([-\tau,T],\mathbb{R}^d)\to \mathbb{R}^{d_1}$ be such that for $i=1,2$,
	\begin{equation}\label{integrability-b}
	\int_0^T\left\lvert \tilde{b}_i(s,X^0)\right\rvert^2\de s<\infty, \quad \mathbb{P}{\text-}a.s.
	\end{equation}
	and $\tilde{b}_i(t,x)$ depends only to the path of $x$ on $[-\tau,t]$. Let  $X^{(i)}$, $i=1,2$, defined on the probability spaces $\left(\Omega^{(i)},\mathcal{F}^{(i)},\left(\mathcal{F}_t^{(i)}\right)_{t\geq 0},\mathbb{Q}^{(i)}\right)$  be weak solutions to the equations
	\begin{equation}\label{equ2}
	\begin{dcases}\de X^{(i)}_t=b_i(t,X^{(i)})\de t+\sigma(t,X^{(i)})\de W^{(i)}_t,\quad t\in[0,T]\\X^{(i)}_t=\xi^{(i)}_t, \quad t\in[-\tau,0],\end{dcases} 
	\end{equation}
	where $b_i\equiv \sigma \tilde{b}_i$, and $\xi^{(i)}$ is independent of $W^{(i)}$ and has the same law as $\xi$. Assume that for $i=1,2$, $X^{(i)}$ satisfies for $j=1,2$,
	\begin{equation}\label{finite}
	\int_0^T\left\lvert \tilde{b}_j(s,X^{(i)})\right\rvert^2\de s<\infty,\quad \mathbb{Q}^{(i)}\text{-}a.s.
	\end{equation}
	If $\mu^{(i)}_t$, $i=1,2$ denotes the law of $X^{(i)}_t$, then for any continuous function $\phi:\mathbb{R}^d\to (0,\infty)$
	\begin{align}\label{result1}
	\left\lVert \mu^{(1)}_t-\mu^{(2)}_t\right\rVert_{\phi}\nonumber&\leq\sum_{i=1}^2\mathbb{E}_{\mathbb{Q}^{(i)}}\left[\phi\left(X^{(i)}_{t}\right)\int_0^{t}\left\lvert \tilde{b}_1\left(s, X^{(i)}\right)-\tilde{b}_2\left(s,X^{(i)}\right)\right\rvert^2\de s\right]\nonumber\\&\quad+\sum_{i=1}^2\left(\mathbb{E}_{\mathbb{Q}^{(i)}}\left[\phi^2\left(X^{(i)}_{t}\right)\right]\right)^{1/2}\left(\mathbb{E}_{\mathbb{Q}^{(i)}}\left[\int_0^{t}\left\lvert \tilde{b}_1\left(s, X^{(i)}\right)-\tilde{b}_2\left(s,X^{(i)}\right)\right\rvert^2\de s\right]\right)^{1/2}.
	\end{align}
	In addition, let $b_i(t,x):=b\left(t,x,\mu^{(i)}\right)$ and assume that there exist measurable functions $\varphi:[0,T]\to C(\mathbb{R}^d,(0,\infty))$ and $\psi:[0,T]\times C([-\tau,T],\mathbb{R}^d)\to [0,\infty)$ and  an increasing positive valued function $g$ with $\int_{0^+} \frac{1}{g(u)}\de u =\infty$ such that for every $\mu,\nu \in\mathcal{M}_T$ with $\mu\vert_{[-\tau,0]}=\nu\vert_{[-\tau,0]}$,
	\begin{equation}\label{ineq-b}
	\left\lvert \tilde{b}\left(t,x,\mu\right)-\tilde{b}\left(t,x,\nu\right)\right\rvert\leq \psi(t,x)g^{1/2}\left(\sup_{s\in[0,t]}\left\lVert \mu_s-\nu_s\right\rVert_{\varphi_s}^2\right),
	\end{equation}
	Then $\mathbb{Q}^{(1)}\circ \left(X^{(1)}\right)^{-1}= \mathbb{Q}^{(2)}\circ \left(X^{(2)}\right)^{-1}$ provided that
	\begin{align}\label{finiteint}
	& \int_0^T\sup_{t\in[s,T]}\Bigg\lbrace\sum_{i=1}^2\mathbb{E}_{\mathbb{Q}^{(i)}} 
	\left[\varphi_t\left(X^{(i)}_{t}\right)\int_0^{t}\left\lvert \tilde{b}\left(u, X^{(i)},
	\mu^{(1)}\right)-\tilde{b}\left(u,X^{(i)},\mu^{(2)}\right)\right\rvert^2\de u\right] \cdot\nonumber \\ 
	& \quad\cdot\mathbb{E}_{\mathbb{Q}^{(i)}}\left[\varphi_t\left(X^{(i)}_{t}\right) \psi^2 
	(s,X^{(i)})\right] + \sum_{i=1}^2\mathbb{E}_{\mathbb{Q}^{(i)}} 
	\left[\varphi_t^2\left(X^{(i)}_{t} \right) \right]\cdot 
	\mathbb{E}_{\mathbb{Q}^{(i)}} \left[\psi^2(s,X^{(i)})\right]\Bigg\rbrace\de s < \infty
	\end{align} 
	for $i=1,2$.
\end{theorem}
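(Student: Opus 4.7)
The approach is a Girsanov reduction to the driftless equation \eqref{equ0}, whose solution is already unique by assumption. On each space $\bigl(\Omega^{(i)},\mathcal{F}^{(i)},\mathbb{Q}^{(i)}\bigr)$ I introduce the exponential
\[
M^{(i)}_t := \exp\left(-\int_0^t \tilde{b}_i(s,X^{(i)})\,\de W^{(i)}_s - \tfrac{1}{2}\int_0^t \bigl\lvert\tilde{b}_i(s,X^{(i)})\bigr\rvert^2\,\de s\right),
\]
and verify via \eqref{finite} and a localization argument (in the spirit of Portenko) that $M^{(i)}$ is a true $\mathbb{Q}^{(i)}$-martingale. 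Under the equivalent measure $\tilde{\mathbb{Q}}^{(i)}:=M^{(i)}_T\cdot\mathbb{Q}^{(i)}$ the process $X^{(i)}$ satisfies \eqref{equ0} with respect to $\tilde{W}^{(i)}_t:=W^{(i)}_t+\int_0^t \tilde{b}_i(s,X^{(i)})\,\de s$, so strong uniqueness of \eqref{equ0} forces the joint law of $\bigl(X^{(i)},\tilde{W}^{(i)}\bigr)$ under $\tilde{\mathbb{Q}}^{(i)}$ to coincide with that of $(X^0,W)$ under $\mathbb{P}$. Pulling back, for every bounded measurable $f$ with $|f|\le 1$,
\[
\int_{\mathbb{R}^d}\phi\,f\,\de\mu^{(i)}_t = \mathbb{E}_{\mathbb{P}}\!\left[\phi(X^0_t)\,f(X^0_t)\,\bar{Z}^{(i)}_t\right],\quad \bar{Z}^{(i)}_t := \exp\!\left(\int_0^t\tilde{b}_i(s,X^0)\,\de W_s - \tfrac{1}{2}\int_0^t |\tilde{b}_i(s,X^0)|^2\,\de s\right),
\]
and the Hahn decomposition of $\mu^{(1)}_t-\mu^{(2)}_t$ combined with a conditioning-and-Jensen step at $X^0_t$ yields $\left\lVert\mu^{(1)}_t-\mu^{(2)}_t\right\rVert_\phi\le \mathbb{E}_{\mathbb{P}}\!\left[\phi(X^0_t)\bigl\lvert\bar{Z}^{(1)}_t-\bar{Z}^{(2)}_t\bigr\rvert\right]$.

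To reach the explicit form \eqref{result1} I apply the elementary bound $|e^a-e^b|\le|a-b|(e^a+e^b)$ to the exponents $\alpha^{(i)}_t$ of $\bar{Z}^{(i)}_t$ and transport the factor $\bar{Z}^{(i)}_t$ back into a $\mathbb{Q}^{(i)}$-expectation. The algebraic identity $\tfrac{1}{2}(|\tilde{b}_1|^2-|\tilde{b}_2|^2)-\tilde{b}_i\cdot(\tilde{b}_1-\tilde{b}_2) = \pm\tfrac{1}{2}|\tilde{b}_1-\tilde{b}_2|^2$, combined with $W^{(i)}=\tilde{W}^{(i)}-\int\tilde{b}_i\,\de s$, then rewrites $\alpha^{(1)}-\alpha^{(2)}$ evaluated on $(X^{(i)},\tilde{W}^{(i)})$ as $\int_0^t(\tilde{b}_1-\tilde{b}_2)(s,X^{(i)})\,\de W^{(i)}_s\pm\tfrac{1}{2}\int_0^t|\tilde{b}_1-\tilde{b}_2|^2(s,X^{(i)})\,\de s$. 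The triangle inequality separates the two pieces; Cauchy--Schwarz together with It\^o's isometry on the stochastic-integral piece produces the Cauchy--Schwarz summand of \eqref{result1}, while the finite-variation part produces the $\mathbb{E}_{\mathbb{Q}^{(i)}}\bigl[\phi\int|\tilde{b}_1-\tilde{b}_2|^2\,\de s\bigr]$ summand.

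For the uniqueness claim I specialize \eqref{result1} to $\tilde{b}_i(s,x)=\tilde{b}(s,x,\mu^{(i)})$ and $\phi=\varphi_t$. Substituting the modulus condition \eqref{ineq-b} and using monotonicity of $g$ to factor $g\bigl(\sup_{u\le s}\left\lVert\mu^{(1)}_u-\mu^{(2)}_u\right\rVert^2_{\varphi_u}\bigr)$ out of the $\mathbb{Q}^{(i)}$-expectations produces a self-referential Bihari-type inequality
\[
\sup_{r\le t}\left\lVert\mu^{(1)}_r-\mu^{(2)}_r\right\rVert^2_{\varphi_r} \le \int_0^t K(s)\,g\!\left(\sup_{u\le s}\left\lVert\mu^{(1)}_u-\mu^{(2)}_u\right\rVert^2_{\varphi_u}\right)\de s,
\]
with kernel $K\in L^1([0,T])$ thanks to \eqref{finiteint}. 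Since $\mu^{(1)}=\mu^{(2)}$ on $[-\tau,0]$, Osgood's lemma together with $\int_{0^+}\de u/g(u)=\infty$ forces the left-hand side to remain identically zero, so $\mu^{(1)}=\mu^{(2)}$ on $[-\tau,T]$; then $b_1\equiv b_2$, the common Girsanov reduction gives $\bar{Z}^{(1)}=\bar{Z}^{(2)}$, and the path laws $\mathbb{Q}^{(i)}\circ(X^{(i)})^{-1}$ coincide. The main technical obstacle is the martingale property of $M^{(i)}$: under merely almost sure $L^2$-integrability \eqref{finite}, Novikov's criterion is unavailable, and the localization step must be performed carefully, exploiting that $X^{(i)}$ is itself a weak solution of \eqref{equ2}.
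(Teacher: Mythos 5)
Your proposal follows essentially the same route as the paper: Girsanov transfer of both weak solutions onto the driftless strong solution $X^0$, the elementary bound $\lvert e^a-e^b\rvert\le\lvert a-b\rvert(e^a+e^b)$, the rewriting of the exponent difference as a stochastic integral against the tilted Brownian motion plus $\pm\tfrac12\int\lvert\tilde b_1-\tilde b_2\rvert^2$, Cauchy--Schwarz with It\^o's isometry, and an Osgood/Bihari closing argument. The one technical obstacle you correctly flag (the exponential need not be a true martingale under \eqref{finite} alone) is resolved in the paper not by proving the martingale property but by a two-sided localization with stopping times $\tau_n$ on the $X^0$-space and $\zeta^{(i)}_n$ on the $X^{(i)}$-space, identifying the stopped laws through the strong-uniqueness functional $F$, and then passing to the limit via the truncation $\phi_\varepsilon=\phi/(1+\varepsilon\phi)$ and the lower semicontinuity Lemma \ref{lemma}.
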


\begin{proof}
	Let $X^0$ be the unique strong solution to the following equation
	\begin{equation*}
	\begin{dcases}\de X^0_t=\sigma(t,X^0)\de W_t,\quad t\in[0,T],\\X^0_t=\xi_t, \quad t\in[-\tau,0].\end{dcases}
	\end{equation*}
	Using Girsanov transformation, it turns out that equation \eqref{equ2} has at most one weak solution satisfying \eqref{finite}. Let us define the stopping time $\tau_n$ as
	\[ 
	\tau_n := \inf\left\lbrace t \geq 0,\min_{i=1,2}\int_0^t\left\lvert\tilde{b}_i(s,X^0)\right\rvert^2 
	\de s>n\right\rbrace. 
	\]
	Then the following process for $i=1,2$ is a martingale
	\[ 
	M^{(i)}_{t\wedge\tau_n}:=\exp\left(\int_0^{t\wedge \tau_n} \tilde{b}_i(s,X^0)\cdot\de W_s 
	-\frac{1}{2}
	\int_0^{t\wedge \tau_n}\left\lvert \tilde{b}_i(s,X^0)\right\rvert^2\de s\right) 
	,\quad  t\in[0,T].\]
	Let $\mathbb{P}^{i,n}$ be the probability measure  with density
	\[ 
	\frac{\de \mathbb{P}^{i,n}}{\de \mathbb{P}}\bigg\vert_{\mathcal{F}_{T}}=M^{(i)}_{T\wedge\tau_n}. 
	\]
	By Girsanov theorem, the process
	\[\tilde{W}^{(i)}_{t\wedge \tau_n}=W_{t\wedge \tau_n}-\int_0^{t\wedge \tau_n}\tilde{b}_i(s,X^0)\de s, \quad t\in[0,T],\]
	with respect to the probability measure $\mathbb{P}^{i,n}$ for $i=1,2$, is a standard Brownian motion 
	on $\mathbb{R}^{d_1}$ until time $\tau_n$ and we have
	\[X^0_{t\wedge \tau_n}=\xi_0+\int_0^{t\wedge \tau_n} b_i(s,X^0)\de s+\int_0^{t\wedge \tau_n} 
	\sigma(s,X^0)\de \tilde{W}^{(i)}_s.\]
	Let
	\[\zeta^{(i)}_n:=\inf\left\lbrace t \geq 0,\min_{j=1,2} \int_0^t 
	\left\lvert \tilde{b}_j(s,X^{(i)})\right\rvert^2\de s>n\right\rbrace.\]
	Then if we define 
	\[\frac{\de \mathbb{Q}^{i,n}}{\de \mathbb{Q}^{(i)}} 
	\bigg\vert_{\mathcal{F}^{(i)}_{T\wedge \zeta^{(i)}_n}}:=\exp\left(-\int_0^{T\wedge \zeta^{(i)}_n} 
	\tilde{b}_i(s,X^{(i)})\cdot\de W^{(i)}_s-\frac{1}{2}\int_0^{T\wedge \zeta^{(i)}_n} 
	\left\lvert \tilde{b}_i(s,X^{(i)})\right\rvert^2\de s\right)\]
	then 
	\[ 
	\bar{W}^{(i)}_{t\wedge \zeta^{(i)}_n}=W^{(i)}_{t\wedge \zeta^{(i)}_n}+\int_0^{t\wedge 
		\zeta^{(i)}_n}\tilde{b}_i(s,X^{(i)})\de s,\quad  t\in[0,T], 
	\]
	with respect to the probability measure $\mathbb{Q}^{i,n}$, for $i=1,2$, is a standard Brownian motion 
	in $\mathbb{R}^{d_1}$  until time $\zeta^{(i)}_n$ and we have that
	\[X^{(i)}_{t\wedge\zeta^{(i)}_n}=\xi^{(i)}_0+\int_0^{t\wedge\zeta^{(i)}_n}\sigma(s,X^{(i)})\de 
	\bar{W}^{(i)}_s,\]
	and $\left(X^{(i)}_t\right)_{-\tau\leq t\leq 0}$  w.r.t $\mathbb{Q}^{i,n}$ has the same law as $\xi$.
	Since equation \eqref{equ0} has a unique strong solution, there exists a measurable function
	\[
	F:C\left([-\tau,0],\mathbb{R}^d\right)\times C\left([0,T],\mathbb{R}^{d_1}\right)\to C\left([-\tau,T],
	\mathbb{R}^d\right) 
	\] 
	such that $X^0=F\left(\xi, W\right)$ and similarly $X^{(i)}_{\cdot\wedge \zeta^{(i)}_n} 
	= F\left(\xi^{(i)},\bar{W}^{(i)}_{\cdot\wedge \zeta^{(i)}_n}\right)$. Hence for $-\tau\leq t_0 
	\leq t_1\leq \cdots \leq t_m\leq T$,
	\begin{align*}
	\mathbb{Q}^{(i)} & \left[\left(X^{(i)}_{t_0\wedge \zeta^{(i)}_n}, \ldots,  
	X^{(i)}_{t_m\wedge \zeta^{(i)}_n}\right)\in \Gamma\right]
	\\ 
	& \qquad =\int_{\Omega^{(i)}} \exp\left(\int_0^{T\wedge \zeta^{(i)}_n} \tilde{b}_i(s,X^{(i)})\cdot\de 
	\bar{W}^{(i)}_s-\frac{1}{2}\int_0^{T\wedge \zeta^{(i)}_n}\left\lvert \tilde{b}_i (s,X^{(i)}) 
	\right\rvert^2\de s\right) \cdot \\ 
	& \qquad\qquad\qquad \cdot\mathbf{1}_{\left\lbrace\left(X_{t_0\wedge \zeta^{(i)}_n}^{(i)},\ldots, 
		X^{(i)}_{t_m\wedge \zeta^{(i)}_n}\right)\in \Gamma\right\rbrace}    
	\de \mathbb{Q}^{i,n} \\ 
	& \qquad =\int_{\Omega} \exp\left(\int_0^{T\wedge \tau_n} \tilde{b}_i(s,X^0)\cdot\de W_s 
	- \frac{1}{2}\int_0^{T\wedge \tau_n}\left\lvert \tilde{b}_i(s,X^0)  
	\right\rvert^2 \de s\right)\cdot \\ 
	& \qquad\qquad\qquad\cdot \mathbf{1}_{\left\lbrace\left(X^0_{t_0\wedge \tau_n},\ldots,  
		X^0_{t_m\wedge \tau_n}\right)\in \Gamma\right\rbrace} \de \mathbb{P} \\ 
	& \qquad =\mathbb{P}^{i,n}\left[\left(X^0_{t_0\wedge \tau_n},\ldots, X^0_{t_m\wedge \tau_n}\right) 
	\in \Gamma\right]\,.
	\end{align*}
	By taking the limit of $n\to\infty$, we get that the law of $X^0_{\cdot\wedge \tau_n}$ with respect to 
	$\mathbb{P}^{i,n}$ converges weakly to the law of $X^{(i)}$ with respect to $\mathbb{Q}^{(i)}$ since 
	$\mathbb{Q}^{(i)}\left(\sup_{n\geq 1}\zeta_n^{(i)}\geq T \right)=1$. Let us define the function 
	\[ 
	\phi_\varepsilon(y):=\frac{\phi(y)}{1+\varepsilon{\phi(y)}}. 
	\]
	Using Lemma \ref{lemma}, applied to the bounded function $\phi_\varepsilon \in  \mathcal{L}^1  
	(|\mu_t^{(1)} - \mu_t^{(2)} |)$, we obtain that 
	\begin{align}\label{dist}
	\left\lVert \mu^{(1)}_t-\mu^{(2)}_t\right\rVert_{\phi} 
	& =\int_{\mathbb{R}^d}\phi(y)\left\lvert\mu^{(1)}_t-\mu^{(2)}_t\right\rvert(\de y) 
	\nonumber \\ 
	& = \lim_{\varepsilon\searrow 0} \int_{\mathbb{R}^d} \phi_\varepsilon (y) 
	\left\lvert \mu^{(1)}_t-\mu^{(2)}_t\right\rvert(\de y) \nonumber \\ 
	& \leq \liminf_{\varepsilon\searrow 0}\liminf_{n\to \infty}\int_{\mathbb{R}^d} \phi_\varepsilon(y) 
	\left\lvert(\mathbb{P}^{1,n})\circ\left(X^0_{t\wedge \tau_n}\right)^{-1}-(\mathbb{P}^{2,n}) 
	\circ \left(X^0_{t\wedge \tau_n}\right)^{-1}\right\rvert(\de y).
	\end{align}
	For $A\in \mathcal{B}\left(\mathbb{R}^d\right)$, we have
	\begin{align*}
	\MoveEqLeft[3] 
	\left\lvert(\mathbb{P}^{1,n})\circ \left(X^0_{t\wedge \tau_n}\right)^{-1}-(\mathbb{P}^{2,n})\circ 
	\left(X^0_{t\wedge \tau_n}\right)^{-1}\right\rvert\left(A\right) \\ 
	& =\sup_{m\ge 1 ; \sqcup_{i=1}^m A_i=A}\sum_{i=1}^m 
	\left\lvert\mathbb{P}^{1,n}\left(X^0_{t\wedge \tau_n}\in A_i\right)-\mathbb{P}^{2,n} 
	\left( X^0_{t\wedge\tau_n}\in A_i\right)\right\rvert \\ 
	& = \sup_{m\ge 1; \sqcup_{i=1}^m A_i=A}\sum_{i=1}^m 
	\left\lvert\int_\Omega \left(M^{(1)}_{t\wedge\tau_n}-M^{(2)}_{t\wedge\tau_n}\right) 
	\mathbf{1}_{\left\lbrace X^0_{t\wedge \tau_n}\in A_i\right\rbrace}\de \mathbb{P}\right\rvert \\ 
	& \leq \sup_{m\ge 1; \sqcup_{i=1}^m A_i=A}\sum_{i=1}^m\int_\Omega \left\lvert M^{(1)}_{t\wedge\tau_n} 
	-M^{(2)}_{t\wedge\tau_n}\right\rvert\cdot\mathbf{1}_{\left\lbrace X^0_{t\wedge \tau_n} 
		\in A_i\right\rbrace}\de\mathbb{P}\\&=\int_\Omega \left\lvert M^{(1)}_{t\wedge\tau_n} 
	-M^{(2)}_{t\wedge\tau_n}\right\rvert\cdot\mathbf{1}_{\left\lbrace X^0_{t\wedge \tau_n} 
		\in A\right\rbrace}\de\mathbb{P}, 
	\end{align*}
	where $\sqcup_{i=1}^m A_i$ means the disjoint union of Borel measurable sets $A_i$, $1\leq i\leq m$. 
	Therefore
	\[ 
	\left\lVert \mu^{(1)}_t-\mu^{(2)}_t\right\rVert_\phi 
	\leq \liminf_{\varepsilon\searrow 0} \liminf_{n\to \infty} 
	\mathbb{E}_\mathbb{P}\left[\phi_\varepsilon\left( X^0_{t\wedge \tau_n}\right)\left\lvert 
	M^{(1)}_{t\wedge \tau_n}-M^{(2)}_{t\wedge \tau_n}\right\rvert\right]. 
	\]
	By using the inequality
	$\left\lvert e^x-e^y\right\rvert \leq \left\lvert x-y\right\rvert (e^x+e^y)$,
	we get
	\[
	\left\lvert M^{(1)}_{t\wedge \tau_n}-M^{(2)}_{t\wedge \tau_n}\right\rvert\leq (M^{(1)}_{t\wedge \tau_n}+M^{(2)}_{t\wedge \tau_n})N_{t\wedge \tau_n} 
	\]
	where
	\begin{align*}
	N_t &:=\Bigg\lvert\int_0^t\left[\tilde{b}_1(s,X^0)-\tilde{b}_2(s,X^0)\right]\cdot \de W_s-\frac{1}{2}\int_0^t\left[ \left\lvert\tilde{b}_1(s,X^0)\right\rvert^2-\left\lvert \tilde{b}_2(s,X^0)\right\rvert^2\right]\de s\Bigg\rvert
	\\&=\Bigg\lvert\int_0^t\left[\tilde{b}_1(s,X^0)-\tilde{b}_2(s,X^0)\right]\cdot \de \tilde{W}^{(1)}_s-\frac{1}{2}\int_0^t\left[ \left\lvert\tilde{b}_1(s,X^0)\right\rvert^2-\left\lvert \tilde{b}_2(s,X^0)\right\rvert^2\right]\de s
	\\&\qquad +\int_0^t\left[\tilde{b}_1(s,X^0)-\tilde{b}_2(s,X^0)\right]\cdot \tilde{b}_1(s,X^0)\de s
	\Bigg\rvert
	\\&=\Bigg\lvert\int_0^t\left[\tilde{b}_1(s,X^0)-\tilde{b}_2(s,X^0)\right]\cdot \de \tilde{W}^{(1)}_s+\frac{1}{2}\int_0^t\left\lvert\tilde{b}_1(s,X^0)-\tilde{b}_2(s,X^0)\right\rvert^2\de s\Bigg\rvert,
	\end{align*}
	and also similarly
	\begin{align*}
	N_t&=\Bigg\lvert\int_0^t\left[\tilde{b}_1(s,X^0)-\tilde{b}_2(s,X^0)\right]\cdot \de \tilde{W}^{(2)}_s-\frac{1}{2}\int_0^t\left\lvert\tilde{b}_1(s,X^0)-\tilde{b}_2(s,X^0)\right\rvert^2\de s\Bigg\rvert.
	\end{align*}
	So using Cauchy-Schwartz inequality, we get
	\begin{align*}
	\MoveEqLeft[0]\mathbb{E}_{\mathbb{P}}\left[\phi_\varepsilon\left(X^0_{t\wedge \tau_n}\right)\left\lvert M^{(1)}_{t\wedge \tau_n}-M^{(2)}_{t\wedge \tau_n}\right\rvert\right]\\&\leq \sum_{i=1}^2 \mathbb{E}_{\mathbb{P}}\left[ \phi_\varepsilon\left(X^0_{t\wedge \tau_n}\right)M^{(i)}_{t\wedge \tau_n}\int_0^{t\wedge \tau_n}\left\lvert\tilde{b}_1(s,X^0)-\tilde{b}_2(s,X^0)\right\rvert^2\de s\right]\\&\quad +\sum_{i=1}^2 \left(\mathbb{E}_{\mathbb{P}}\left[ \phi_\varepsilon^2\left(X^0_{t\wedge \tau_n}\right)M^{(i)}_{t\wedge \tau_n}\right]\right)^{1/2}\left(\mathbb{E}_{\mathbb{P}}\left[M^{(i)}_{t\wedge \tau_n}\left\lvert\int_0^{t\wedge \tau_n}\left[\tilde{b}_1(s,X^0)-\tilde{b}_2(s,X^0)\right]\cdot \de \tilde{W}^{(i)}_s\right\rvert^2\right]\right)^{1/2}\\&\leq \sum_{i=1}^2\mathbb{E}_{\mathbb{Q}^{(i)}}\left[\phi_\varepsilon\left(X^{(i)}_{t\wedge \zeta_n^{(i)}}\right)\int_0^{t\wedge \zeta_n^{(i)}}\left\lvert\tilde{b}_1(s,X^{(i)})-\tilde{b}_2(s,X^{(i)})\right\rvert^2\de s\right]\\&\quad+\sum_{i=1}^2\left(\mathbb{E}_{\mathbb{Q}^{(i)}}\left[\phi_\varepsilon^2\left(X^{(i)}_{t\wedge \zeta_n^{(i)}}\right)\right]\right)^{1/2}\left(\mathbb{E}_{\mathbb{Q}^{(i)}}\left[\int_0^{t\wedge \zeta_n^{(i)}}\left\lvert\tilde{b}_1(s,X^{(i)})-\tilde{b}_2(s,X^{(i)})\right\rvert^2\de s\right]\right)^{1/2}.
	\end{align*}
	Since $\phi_\varepsilon$ is bounded and continuous, we have
	\begin{align*}
	\MoveEqLeft[2]\liminf_{n\to \infty}\mathbb{E}_{\mathbb{P}}\left[\phi_\varepsilon\left(X^0_{t\wedge \tau_n}\right)\left\lvert M^{(1)}_{t\wedge \tau_n}-M^{(2)}_{t\wedge \tau_n}\right\rvert\right]\\&\leq \sum_{i=1}^2\mathbb{E}_{\mathbb{Q}^{(i)}}\left[\phi_\varepsilon\left(X^{(i)}_{t}\right)\int_0^{t}\left\lvert\tilde{b}_1(s,X^{(i)})-\tilde{b}_2(s,X^{(i)})\right\rvert^2\de s\right]\\&\quad+\sum_{i=1}^2\left(\mathbb{E}_{\mathbb{Q}^{(i)}}\left[\phi_\varepsilon^2\left(X^{(i)}_{t}\right)\right]\right)^{1/2}\left(\mathbb{E}_{\mathbb{Q}^{(i)}}\left[\int_0^{t}\left\lvert\tilde{b}_1(s,X^{(i)})-\tilde{b}_2(s,X^{(i)})\right\rvert^2\de s\right]\right)^{1/2}\\&\leq \sum_{i=1}^2\mathbb{E}_{\mathbb{Q}^{(i)}}\left[\phi\left(X^{(i)}_{t}\right)\int_0^{t}\left\lvert\tilde{b}_1(s,X^{(i)})-\tilde{b}_2(s,X^{(i)})\right\rvert^2\de s\right]\\&\quad+\sum_{i=1}^2\left(\mathbb{E}_{\mathbb{Q}^{(i)}}\left[\phi^2\left(X^{(i)}_{t}\right)\right]\right)^{1/2}\left(\mathbb{E}_{\mathbb{Q}^{(i)}}\left[\int_0^{t}\left\lvert\tilde{b}_1(s,X^{(i)})-\tilde{b}_2(s,X^{(i)})\right\rvert^2\de s\right]\right)^{1/2}
	\end{align*}
	Therefore by \eqref{dist}, we get inequality \eqref{result1}. Let us now turn to the case where $\tilde{b}_i(s,x)=b(s,x,\mu^{(i)})$. First we square both sides of \eqref{result1} with $\phi=\varphi_t$ and then we substitute inequality \eqref{ineq-b} in \eqref{result1} in the following calculation,
	\begin{align*}
	\MoveEqLeft[2]\left\lVert \mu^{(1)}_t-\mu^{(2)}_t\right\rVert^2_{\varphi_t}\\&\leq
	C\sum_{i=1}^2\left(\mathbb{E}_{\mathbb{Q}^{(i)}}\left[\varphi_t\left(X^{(i)}_{t}\right)\int_0^{t}\left\lvert \tilde{b}\left(s, X^{(i)},\mu^{(1)}\right)-\tilde{b}\left(s,X^{(i)},\mu^{(2)}\right)\right\rvert^2\de s\right]\right)^2\nonumber\\&\quad+C\sum_{i=1}^2\mathbb{E}_{\mathbb{Q}^{(i)}}\left[\varphi_t^2\left(X^{(i)}_{t}\right)\right]\cdot\mathbb{E}_{\mathbb{Q}^{(i)}}\left[\int_0^{t}\left\lvert \tilde{b}\left(s, X^{(i)},\mu^{(1)}\right)-\tilde{b}\left(s,X^{(i)},\mu^{(2)}\right)\right\rvert^2\de s\right]\\&\leq  C\sum_{i=1}^2\mathbb{E}_{\mathbb{Q}^{(i)}}\left[\varphi_t\left(X^{(i)}_{t}\right)\int_0^{t}\left\lvert \tilde{b}\left(s, X^{(i)},\mu^{(1)}\right)-\tilde{b}\left(s,X^{(i)},\mu^{(2)}\right)\right\rvert^2\de s\right]\\&\quad\cdot \mathbb{E}_{\mathbb{Q}^{(i)}}\left[\varphi_t\left(X^{(i)}_{t}\right)\int_0^{t}\psi^2(s,X^{(i)})g \left(\sup_{u\in[0,s]}\left\lVert \mu^{(1)}_u-\mu^{(2)}_u\right\rVert^2_{\varphi_u}\right)\de s\right]\nonumber\\&\quad+C\sum_{i=1}^2\mathbb{E}_{\mathbb{Q}^{(i)}}\left[\varphi_t^2\left(X^{(i)}_{t}\right)\right]
	\cdot\mathbb{E}_{\mathbb{Q}^{(i)}}\left[\int_0^{t}\psi^2(s,X^{(i)})g \left(\sup_{u\in[0,s]}\left\lVert \mu^{(1)}_u-\mu^{(2)}_u\right\rVert^2_{\varphi_u}\right)\de s\right].
	\end{align*}
	Then for the function 
	\begin{align*}
	H(t,s) 
	& := C\sum_{i=1}^2\mathbb{E}_{\mathbb{Q}^{(i)}}\left[\varphi_t\left(X^{(i)}_{t}\right)\int_0^{t} 
	\left\lvert \tilde{b}\left(u, X^{(i)},\mu^{(1)}\right)-\tilde{b}\left(u,X^{(i)},\mu^{(2)}\right)\right\rvert^2\de u\right]\cdot \\ 
	&\quad \cdot \mathbb{E}_{\mathbb{Q}^{(i)}}\left[\varphi_t\left(X^{(i)}_{t}\right)\psi^2(s,X^{(i)})\right]+C\sum_{i=1}^2\mathbb{E}_{\mathbb{Q}^{(i)}}\left[\varphi_t^2\left(X^{(i)}_{t}\right)\right]
	\cdot\mathbb{E}_{\mathbb{Q}^{(i)}}\left[\psi^2(s,X^{(i)})\right],
	\end{align*}
	we have 
	\[
	\left\lVert \mu^{(1)}_t-\mu^{(2)}_t\right\rVert^2_{\varphi_t}\leq \int_0^t H(t,s) 
	g \left(\sup_{u\in[0,s]}\left\lVert \mu^{(1)}_u-\mu^{(2)}_u\right\rVert^2_{\varphi_u}\right)\de s.
	\]
	Now define $h(s) := \sup_{u\in[s,T]} H(u,s)$. The assumption \eqref{finiteint} implies that $h$ is integrable and on the other hand, 
	\[
	\sup_{u\in[0,t]}\left\lVert \mu^{(1)}_u-\mu^{(2)}_u\right\rVert_{\varphi_u}^2 
	\leq \int_0^t h(s)g\left(\sup_{u\in[0,s]}\left\lVert \mu^{(1)}_u - \mu^{(2)}_u 
	\right\rVert_{\varphi_u}^2\right)\de s.
	\]
	Now consider the function 
	\[ 
	F(t) := \int_0^t h(s)g\left(\sup_{u\in[0,s]}\left\lVert \mu^{(1)}_u - \mu^{(2)}_u \right 
	\rVert_{\varphi_u}^2\right) \de s \, . 
	\] 
	Since $\sup_{u\in [0,t]} \left\lVert \mu^{(1)}_u-\mu^{(2)}_u\right\rVert_{\varphi_u}^2\leq F(t)$ and $g$ is increasing, we have
	\[ 
	F^\prime (t) = h(t) g\left(\sup_{u\in[0,t]}\left\lVert \mu^{(1)}_u-\mu^{(2)}_u \right\rVert_{\varphi_u}^2\right)\leq h(t)g\left(F(t)\right) 
	\]
	and therefore
	\[
	\int_0^{F(t)}\frac{1}{g(u)}\de u=\int_0^t \frac{F^\prime(s)}{g\left(F(s)\right)}\de s 
	\leq\int_0^t h(s)\de s < \infty. 
	\]
	Since $\int_{0^+}\frac{1}{g(u)}\de u=\infty$, $F(t)$ must be zero and hence 
	$\sup_{u\in[0,t]}\left\lVert \mu^{(1)}_u-\mu^{(2)}_u\right\rVert_{\varphi_u}^2\equiv 0$. Since 
	$\varphi$ is positive, this implies $ \mu^{(1)}_t=\mu^{(2)}_t$, for all $t\in [0,T]$. Therefore 
	$\mathbb{P}^{1,n}=\mathbb{P}^{2,n}$ and since $\mathbb{P}^{i,n}\circ \left(X^0_{\cdot 
		\wedge \tau_n}\right)^{-1}$ converges weakly to $\mathbb{Q}^{(i)}\circ \left( X^{(i)}\right)^{-1}$,  
	we get $\mathbb{Q}^{(1)}\circ \left( X^{(1)}\right)^{-1}=\mathbb{Q}^{(2)}\circ  
	\left( X^{(2)}\right)^{-1}$.
\end{proof} 

\begin{corollary} 
	\label{theorem2-1}
	Let
	\[
	b(t,x,\mu):=\int_{[-\tau,0]}\int_{\mathbb{R}^d}\beta (t,s,x,y)\mu_{t+s}(\de y)\kappa(\de s)
	\]
	where $\beta\equiv \sigma\tilde{\beta}$ and 
	\[\begin{aligned}\tilde{\beta} &:[0,\infty)\times [-\tau,0]
	\times C\left([-\tau,T],\mathbb{R}^d\right)\times \mathbb{R}^d\to\mathbb{R}^{d_1},\\\sigma &:[0,\infty)
	\times C\left([-\tau,T],\mathbb{R}^d\right)\to\mathbb{R}^{d\times d_1}\end{aligned}\]
	are  measurable 
	functions and $\kappa$ is a probability measure on $[-\tau,0]$. Assume that 
	\[ 
	\begin{dcases}
	\de X^0_t=\sigma(t,X^0)\de W_t,\quad t\in [0,T], \\ 
	X^0_t=\xi_t, \quad t\in[-\tau,0], 
	\end{dcases} 
	\]
	has a unique strong solution. Suppose there exist a function 
	\[V\in C^{1,2} \left([-\tau,T]\times\mathbb{R}^d, [0,\infty)\right)\]
	and measurable functions $\varphi:[-\tau,T]\to 
	C(\mathbb{R}^d,(0,\infty))$ and $\eta:[-\tau,T]\times\mathbb{R}^d\to [0,\infty)$ such that for all 
	$x\in C\left([-\tau,T],\mathbb{R}^d\right)$ and all $y\in\mathbb{R}^d$ the following properties hold:
	\begin{enumerate}[label=(C\theenumi)] 
		\item \label{C1} $\partial_tV(t,x_t)+\left\langle \nabla V(t,x_t), \beta(t,s, x,y) 
		\right\rangle +\frac{1}{2}\mathrm{tr} \left( \sigma^T(t,x)\mathrm{D}^2V(t,x_t)\sigma(t,x)\right) 
		\leq CV(t,x_t),$
		\item \label{C2} $\left\lvert \tilde{\beta}(t,s,x,y)\right\rvert\leq C\varphi(t+s,y)  
		\eta(t+s,x_{t+s}),$
		\item \label{C3} $\eta^4(t,y)+\varphi^2(t,y)\leq C V(t,y),$
		\item \label{C4} $\sup_{s\in[-\tau,0]}\mathbb{E} V(s,\xi_s)<\infty$.
	\end{enumerate}
	Then uniqueness holds for the weak solution to Vlasov-McKean equation \eqref{equ1} in the sense of Definition \ref{definition} with initial value $\xi$.
\end{corollary}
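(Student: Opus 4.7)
The plan is to reduce this corollary to the abstract uniqueness Theorem~\ref{prop1}. Suppose $X^{(1)}$ and $X^{(2)}$ are two weak solutions to \eqref{equ1} with the common initial data $\xi$; denote their laws by $\mu^{(1)},\mu^{(2)}\in\mathcal{M}_T$ and set $\tilde b_i(t,x) := \tilde b(t,x,\mu^{(i)})$. To invoke Theorem~\ref{prop1}, I must verify for these drifts the integrability conditions \eqref{integrability-b} and \eqref{finite}, the contraction-type estimate \eqref{ineq-b} with a suitable choice of $\varphi,\psi,g$, and the finiteness condition \eqref{finiteint}. All of these will reduce, via \ref{C2} and \ref{C3}, to a uniform moment bound on $V(t,X^{(i)}_t)$, which I establish first: integrating the pointwise inequality \ref{C1} against the probability measure $\mu^{(i)}_{t+s}(\de y)\kappa(\de s)$ on $[-\tau,0]\times\mathbb{R}^d$ leaves the terms independent of $(s,y)$ intact, while the inner product becomes $\langle\nabla V(t,x_t),b_i(t,x)\rangle$ by linearity. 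This yields the Lyapunov-drift inequality for the SDE driven by $b_i$. I then apply It\^o's formula to $V(t,X^{(i)}_{t\wedge\tau_n^{(i)}})$ for a localizing sequence $\tau_n^{(i)}$, take expectations to eliminate the martingale, and close the loop with Gronwall and Fatou using \ref{C4} to obtain
\[
\sup_{t\in[-\tau,T]}\mathbb{E}_{\mathbb{Q}^{(i)}}V(t,X^{(i)}_t) \;\leq\; e^{CT}\sup_{s\in[-\tau,0]}\mathbb{E}V(s,\xi_s) \;<\; \infty.
\]

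To verify \eqref{ineq-b}, I observe that for $\mu,\nu\in\mathcal{M}_T$ agreeing on $[-\tau,0]$, the linearity of $b$ in the measure argument gives
\[
\left\lvert\tilde b(t,x,\mu)-\tilde b(t,x,\nu)\right\rvert \;\leq\; \int_{-\tau}^0\int_{\mathbb{R}^d}\left\lvert\tilde\beta(t,s,x,y)\right\rvert\,|\mu_{t+s}-\nu_{t+s}|(\de y)\kappa(\de s).
\]
Plugging in the bound from \ref{C2} and applying Cauchy-Schwarz in $s$ bounds the right-hand side by
\[
C\left(\int_{-\tau}^0\eta^2(t+s,x_{t+s})\kappa(\de s)\right)^{1/2}\sup_{u\in[0,t]}\left\lVert\mu_u-\nu_u\right\rVert_{\varphi_u},
\]
using that $\left\lVert\mu_{t+s}-\nu_{t+s}\right\rVert_{\varphi_{t+s}}=0$ for $t+s<0$. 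This is precisely \eqref{ineq-b} with the choices $g(u):=u$ (so that $\int_{0^+}g(u)^{-1}\,\de u=\infty$) and $\psi(t,x):=C(\int_{-\tau}^0\eta^2(t+s,x_{t+s})\kappa(\de s))^{1/2}$.

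For the remaining integrability requirements, \ref{C3} combined with the Lyapunov bound yields $\sup_{t}\mathbb{E}_{\mathbb{Q}^{(i)}}\varphi^2(t,X^{(i)}_t) < \infty$ and $\sup_{t}\mathbb{E}_{\mathbb{Q}^{(i)}}\eta^4(t,X^{(i)}_t) < \infty$. Applying \ref{C2} and Cauchy-Schwarz to $\tilde b_j$ directly gives
\[
\left\lvert\tilde b_j(u,x)\right\rvert^2 \;\leq\; C\int_{-\tau}^0\eta^2(u+r,x_{u+r})\kappa(\de r)\cdot\int_{-\tau}^0\mathbb{E}_{\mathbb{Q}^{(j)}}\varphi^2(u+r,X^{(j)}_{u+r})\kappa(\de r),
\]
in which the second factor is uniformly bounded in $u$. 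This produces \eqref{finite} at once, and \eqref{integrability-b} by transferring the almost-sure bound from $X^{(i)}$ to $X^0$ via the Girsanov equivalence $\mathbb{P}^{i,n}\sim\mathbb{P}$ used in the proof of Theorem~\ref{prop1}. The same pointwise estimate also makes $\mathbb{E}_{\mathbb{Q}^{(i)}}\psi^2(s,X^{(i)})$ uniformly bounded in $s$, so each summand inside the brace of \eqref{finiteint} factors as a product of uniformly bounded expectations times at most a linear-in-$T$ contribution; integrating in $s\in[0,T]$ then gives \eqref{finiteint}. Theorem~\ref{prop1} thus yields $\mathbb{Q}^{(1)}\circ(X^{(1)})^{-1}=\mathbb{Q}^{(2)}\circ(X^{(2)})^{-1}$, which is the desired weak uniqueness.

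The main obstacle is the Lyapunov moment bound in the first step: \ref{C1} is a pointwise algebraic inequality, and upgrading it to an expectation bound on $V(t,X^{(i)}_t)$ requires careful localization so that the stochastic integral is a genuine martingale, together with a Fatou argument to remove the localization. A secondary delicate point is the almost-sure finiteness \eqref{integrability-b} along the auxiliary process $X^0$, since \ref{C1} does not directly deliver a Lyapunov inequality for the process with zero drift; this is bridged by transporting the almost-sure integrability from $X^{(i)}$ back to $X^0$ via the Girsanov densities. Once these two points are in place, the verifications of \eqref{ineq-b}, \eqref{finite} and \eqref{finiteint} are a straightforward book-keeping exercise using \ref{C2} and \ref{C3}.
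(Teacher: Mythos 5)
Your overall strategy coincides with the paper's: reduce to Theorem \ref{prop1} with $g(u)=u$, a function $\psi$ built from $\eta$ averaged against $\kappa$, and the weight $\varphi$; establish $\sup_{t\in[-\tau,T]}\mathbb{E}\,V\bigl(t,X^{(i)}_t\bigr)<\infty$ by It\^o's formula, localization and Fatou (the paper uses the factor $e^{-Ct}$ in place of Gronwall and the $L^1(\kappa)$-average $\psi(t,x)=C\int_{-\tau}^0\eta(t+s,x_{t+s})\kappa(\de s)$ in place of your $L^2(\kappa)$-average; both variants work, since $\kappa$ is a probability measure); and then verify \eqref{ineq-b}, \eqref{finite} and \eqref{finiteint} from \ref{C2} and \ref{C3}. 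These parts are correct and match the paper.

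The genuine gap is your verification of \eqref{integrability-b}, i.e.\ the $\mathbb{P}$-a.s.\ square integrability of $\tilde b_j(\cdot,X^0)$ along the \emph{driftless} process $X^0$. You propose to transfer the almost-sure bound from $X^{(i)}$ back to $X^0$ ``via the Girsanov equivalence $\mathbb{P}^{i,n}\sim\mathbb{P}$''. This is circular: in the proof of Theorem \ref{prop1} the usefulness of the measures $\mathbb{P}^{i,n}$ rests on the fact that the stopping times $\tau_n$, defined through $\int_0^{\cdot}|\tilde b_j(s,X^0)|^2\de s$, eventually exceed $T$ $\mathbb{P}$-a.s.\ --- and that is precisely \eqref{integrability-b}. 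Without it, the Girsanov identification only relates the laws of the \emph{stopped} processes and gives no control on $\mathbb{P}(\tau_n<T)$. Arguing in the reverse direction from $\mathbb{Q}^{(i)}$ through the densities $\mathbb{Q}^{i,n}$ would require $\mathbb{Q}^{i,n}\bigl(\zeta_n^{(i)}<T\bigr)\to 0$, which needs uniform integrability of those densities and is not available from \eqref{finite} alone. Note also that \ref{C1} provides no Lyapunov control of $X^0$ itself, because the term $\langle\nabla V,\beta\rangle$ cannot be discarded from the driftless generator. The paper closes this point by a purely pathwise estimate: for \emph{every} $x\in C([-\tau,T],\mathbb{R}^d)$,
\[
\int_0^T\bigl\lvert \tilde b\bigl(t,x,\mu^{(i)}\bigr)\bigr\rvert^2\de t
\;\leq\; C\,\sup_{t\in[-\tau,T]}\mathbb{E}\bigl[V\bigl(t,X^{(i)}_t\bigr)\bigr]\int_0^T\int_{[-\tau,0]}\eta^2(t+s,x_{t+s})\,\kappa(\de s)\de t ,
\]
the last factor being finite because $\eta$ is locally bounded (an assumption the paper invokes explicitly at this step, beyond \ref{C1}--\ref{C4}). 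You should either add such a local boundedness hypothesis on $\eta$ and conclude pathwise, or supply a genuinely non-circular argument for \eqref{integrability-b}; as written, this step of your proof does not go through.
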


\begin{proof}
	Let $X^{(1)}_t$ and $X^{(2)}_t$ be two solutions to the Vlasov-McKean equation \eqref{equ1} with laws $\mu^{(1)}_t$ and $\mu^{(2)}_t$. We want to prove that the assumptions of Theorem \ref{prop1} hold with the function $\varphi$ and  
	\[\psi(t,x):=C\int_{-\tau}^0\eta(t+s,x_{t+s})\kappa(\de s), \quad g(u)=u.\]
	We have for $\mu,\nu\in\mathcal{M}_T$ with $\mu\vert_{[-\tau,0]}=\nu\vert_{[-\tau,0]}$,
	\begin{align*}
	\left\lvert b(t,x,\mu)-b(t,x,\nu)\right\rvert&\leq \int_{-\tau}^0 \int_{\mathbb{R}^d}C\varphi(t+s,y)\eta(t+s,x_{t+s})\left\lvert \mu_{t+s}-\nu_{t+s}\right\rvert(\de y)\kappa(\de s)\\&\leq C\int_{-\tau}^0\eta(t+s,x_{t+s})\left\lVert \mu_{t+s}-\nu_{t+s}\right\rVert_{\varphi_{t+s}}\kappa(\de s)\\&\leq \psi(t,x)\sup_{u\in[0,t]}\left\lVert \mu_u-\nu_u\right\rVert_{\varphi_u}
	\end{align*}

	All expectations and integrals in Theorem \ref{prop1} are finite via \ref{C3} provided that 
	\[\sup_{s\in[-\tau,T]}\mathbb{E} V\left(s,X^{(i)}_s\right)<\infty.\]
	We have by inequality \ref{C1},
	\begin{align*}
	e^{-Ct}V\left(t,X^{(i)}_t\right)&=V(0,\xi_0)  
	+\int_0^t e^{-Cs} \Big[-C V\left(s,X^{(i)}_s\right)+\partial_t V\left(s,X^{(i)}_s\right)  \nonumber \\ 
	& \qquad\qquad\quad + \left\langle \nabla V\left(s,X^{(i)}_s\right),\tilde{\mathbb{E}} 
	\int_{-\tau}^0\beta \left(s,u,X^{(i)},\tilde{X}^{(i)}_{s+u}\right)\kappa(\de u)\right\rangle 
	\nonumber \\ 
	& \qquad\qquad\quad + \frac{1}{2}\mathrm{tr}\left( \sigma^T\left(s,X^{(i)}\right) \mathrm{D}^2 
	V\left(s,X^{(i)}_s\right)\sigma\left(s,X^{(i)}\right)\right)\Big]\de s+M_t\\&\leq V(0,\xi_0) 
	+M_t,\nonumber 
	\end{align*}
	where, according to \eqref{integsigmab},
	\[ 
	M_t:=\int_0^t e^{-Cs} \left\langle \nabla V\left(s,X^{(i)}_s\right),\sigma\left(s,X^{(i)}\right)\de 
	W_s\right\rangle, \quad t\geq 0 
	\]
	is a local martingale, i.e. there exist stopping times $\sigma_n\uparrow \infty$ as $n\to \infty$ such 
	that $\left(M_{t\wedge \sigma_n}\right)_{t\geq 0}$ is martingale. By Fatou's lemma,
	\begin{align*}
	\mathbb{E}\left[ e^{-Ct}V\left(t,X^{(i)}_t\right)\right]&\leq \liminf_{n\to\infty}\mathbb{E} 
	\left[ e^{-C(t\wedge \sigma_n)}V\left(t\wedge \sigma_n, X^{(i)}_{t\wedge \sigma_n}\right)\right] 
	\leq \mathbb{E}V(0,\xi_0).
	\end{align*}
	This implies $\sup_{t\in[-\tau,T]}\mathbb{E} V\left(t,X^{(i)}_t\right)\leq e^{CT} 
	\sup_{s\in[-\tau,0]}\mathbb{E}V(s,\xi_s)<\infty$. Hence we have by \ref{C2} and \ref{C3} and  
	locally boundedness of $\eta$ that for $x\in C([-\tau,T],\mathbb{R}^d)$,
	\begin{align} 
	\label{boundb}
	\int_0^T\left\lvert \tilde{b}\left(t,x,\mu^{(i)}\right)\right\rvert^2\de t 
	& \leq \int_0^T\int_{[-\tau,0]}\int_{\mathbb{R}^d}\left\lvert \tilde{\beta}(t,s,x,y)\right\rvert^2\mu^{(i)}_{t+s}(\de y)\kappa(\de s)\de t\nonumber \\ 
	& \leq C^2\int_0^T\int_{[-\tau,0]}\int_{\mathbb{R}^d}\varphi^2(t+s,y)\eta^2(t+s,x_{t+s})\mu^{(i)}_{t+s}(\de y)\kappa(\de s)\de t\nonumber \\ 
	& \leq C^3\int_0^T\int_{[-\tau,0]}\mathbb{E}\left[V(t+s,X^{(i)}_{t+s})\right]\eta^2(t+s,x_{t+s})\kappa(\de s)\de t\nonumber\\&\leq C^3 \sup_{t\in[-\tau,T]}\mathbb{E}\left[V(t,X^{(i)}_{t})\right]\int_0^T\int_{[-\tau,0]} \eta^2(t+s,x_{t+s})\kappa(\de s)\de t<\infty.
	\end{align}
	So the conditions \eqref{integrability-b} and \eqref{finite} in Theorem \ref{prop1} hold. The right hand side of inequality \eqref{finiteint} has the following bound,
	\begin{align*}
	&\int_0^T\sup_{t\in[s,T]}\Bigg\lbrace\sum_{i=1}^2\mathbb{E}\left[\varphi_t\left(X^{(i)}_{t}\right)\int_0^{t}\left\lvert \tilde{b}\left(u, X^{(i)},\mu^{(1)}\right)-\tilde{b}\left(u,X^{(i)},\mu^{(2)}\right)\right\rvert^2\de u\right]\cdot\\&\quad \cdot \mathbb{E}\left[\varphi_t\left(X^{(i)}_{t}\right)\psi^2(s,X^{(i)})\right]+\sum_{i=1}^2\mathbb{E}\left[\varphi_t^2\left(X^{(i)}_{t}\right)\right]
	\cdot\mathbb{E}\left[\psi^2(s,X^{(i)})\right]\Bigg\rbrace\de s\\&\leq \int_0^T\sup_{t\in[s,T]}\Bigg\lbrace\sum_{i=1}^2\mathbb{E}\left[\varphi_t^2\left(X^{(i)}_{t}\right)\right]\cdot\left(\mathbb{E}\left[\psi^4(s,X^{(i)})\right]\right)^{1/2}\cdot\\&\qquad\qquad\quad \cdot\left(\mathbb{E}\left[\left(2\int_0^{T}\left(\left\lvert \tilde{b}\left(u, X^{(i)},\mu^{(1)}\right)\right\rvert^2+\left\lvert\tilde{b}\left(u,X^{(i)},\mu^{(2)}\right)\right\rvert^2\right)\de u\right)^2\right]\right)^{1/2}
	\\&\qquad\qquad\quad +\sum_{i=1}^2\mathbb{E}\left[\varphi_t^2\left(X^{(i)}_{t}\right)\right]
	\cdot\mathbb{E}_{\mathbb{Q}^{(i)}}\left[\psi^2(s,X^{(i)})\right]\Bigg\rbrace\de s
	\end{align*}
	Hence by \eqref{boundb} to prove inequality \eqref{finiteint}, it suffices to show that for $i=1,2$,
	\[\sup_{t\in[0,T]}\mathbb{E}\left[\varphi_t^2\left(X^{(i)}_{t}\right)+\left(\int_{[-\tau,0]}\eta^2\left(t+s, X^{(i)}_{t+s}\right)\kappa(\de s)\right)^2\right]<\infty\]
	which is obvious by $\sup_{t\in[-\tau,T]}\mathbb{E}V(t,X^{(i)}_t)<\infty$ and \ref{C3}.
\end{proof}
\begin{example}
	\label{ex2.6}
	Assume the equation
	\[\de X^0_t=\sigma(t,X^0)\de W_t, \quad X^0_t=\xi_t, t\in[-\tau,0]\]
	has unique strong solution for a locally bounded measurable function $\sigma:[0,T]\times C\left([-\tau,T],\mathbb{R}^d\right)\to \mathbb{R}^{d\times d_1}$.
	Let
	\[b(t,x,\mu):=\int_{\mathbb{R}^d}\beta (t,x,y)\mu_t(\de y)\] where $\beta\equiv\sigma \tilde{\beta}$
	for a measurable function $\tilde{\beta} :[0,T]\times C\left([-\tau,T],\mathbb{R}^d\right)\times \mathbb{R}^d\to\mathbb{R}^d$. 
	Assume that there exist $\alpha\geq 0$ and $p\in[0,2]$ such that one of the following assumptions holds for all $x\in C\left([-\tau,T],\mathbb{R}^d\right)$ and $y\in \mathbb{R}^d$,
	\begin{align*}
	\textbf{1)}&\begin{dcases}\left\lvert x_t\right\rvert^2\left(2\left\langle x_t, \beta(t,x,y)\right\rangle+\left\lvert\sigma(t,x)\right\rvert^2 \right)+(\alpha-2)\left\lvert\sigma^T (t,x)x_t\right\rvert^2\leq C(1+\left\lvert x_t\right\rvert^4),\\
	\left\lvert \tilde{\beta}(t,x,y)\right\rvert\leq C(1+\left\lvert y\right\rvert^{\alpha/2})(1+\left\lvert x_t\right\rvert^{\alpha/4}),\\
	\mathbb{E}\left[\left\lvert\xi_0\right\rvert^{\alpha}\right]<\infty;\end{dcases}
	\\
	\textbf{2)}&\begin{dcases}
	\left\lvert x_t\right\rvert^2\left(2\left\langle x_t, \beta(t,x,y)\right\rangle+\left\lvert\sigma(t,x)\right\rvert^2 \right)+(\alpha p \left\lvert x\right\rvert^p+p-2)\left\lvert\sigma^T (t,x)x_t\right\rvert^2\leq C(1+\left\lvert x_t\right\rvert^{4-p}),\\\left\lvert \tilde{\beta}(t,x,y)\right\rvert\leq C \exp(\frac{\alpha}{2}\left\lvert y\right\rvert^{p}+\frac{\alpha}{4}\left\lvert x_t\right\rvert^{p}),\\
	\mathbb{E}\left[\exp(\alpha\left\lvert \xi_0\right\rvert^p)\right]<\infty.\end{dcases}
	\end{align*}
	Then the assumptions of Corollary \ref{theorem2-1} hold with $\kappa=\delta_0$ (the Dirac measure at point zero) and
	\begin{equation*}
	\begin{dcases}
	\varphi(y) := 1+\left\lvert y\right\rvert^{\alpha/2},\\
	\eta(y) := 1+\left\lvert y\right\rvert^{\alpha/4},\\
	V\in C^2\left(\mathbb{R}^d,[0,\infty)\right) \text{ such that }  V(y)= 1+\left\lvert y\right\rvert^{\alpha} \text{ for }\left\lvert y \right\rvert\geq 1,
	\end{dcases}
	\end{equation*}
	in the case 1 and 
	\begin{equation*}
	\begin{dcases}
	\varphi(y):= \exp(\frac{\alpha}{2}\left\lvert y\right\rvert^{p}),\\
	\eta(y):= \exp(\frac{\alpha}{4}\left\lvert y\right\rvert^{p}),\\
	V\in C^2\left(\mathbb{R}^d,[0,\infty)\right) \text{ such that }  V(y)=\exp(\alpha\left\lvert y\right\rvert^{p}) \text{ for }\left\lvert y \right\rvert\geq 1,
	\end{dcases}
	\end{equation*}
	in the case 2. In particular the solution to the Vlasov-McKean equation \eqref{equ1} with initial value $\xi$ is weakly unique.
\end{example}

\section{Existence Result}

We first show an abstract theorem on the existence of weak solutions to Vlasov-McKean equations with 
measurable coefficients by approximating the respective equation with more regular coefficients. We 
then present explicit Lyapunov type assumptions on the coefficients that imply the assumptions made in 
the abstract approximation result. 

\begin{theorem}
	\label{abstractexistence}
	Let $b,\sigma:[0,\infty)\times\mathbb{R}^d\times\mathbb{R}^d\to \mathbb{R}^d, \mathbb{R}^{d\times d_1}$ 
	be measurable and locally bounded. Consider the equation
	\begin{equation} \label{equ3'}
	\de X_t=\tilde{\mathbb{E}}b(t,X_t,\tilde{X}_t)\de t+\tilde{\mathbb{E}}\sigma(t,X_t,\tilde{X}_t)\de W_t
	\end{equation}
	with initial value $X_0=\xi\in L^2\left(\Omega, \mathcal{F}_0,\mathbb{P};\mathbb{R}^d\right)$. Here 
	$\tilde{X}_t$ is an independent copy of $X_t$. Assume that there exist sequences of measurable functions \[b_n,\sigma_n:[0,\infty)\times\mathbb{R}^d\times\mathbb{R}^d\to \mathbb{R}^d, \mathbb{R}^{d\times d_1},  n\in \mathbb{N}\]
	such that for all $t\in [0,T]$, the functions $(x,y)\mapsto b_n(t,x,y), \sigma_n(t,x,y)$ are continuous and equation 
	\begin{equation}
	\de X^n_t=\tilde{\mathbb{E}}b_n(t,X^n_t,\tilde{X}^n_t)\de t+\tilde{\mathbb{E}}\sigma_n(t,X^n_t,\tilde{X}^n_t)\de W_t
	\end{equation}
	with initial value $X^n_0=\xi$ has a weak solution satisfying
	\begin{equation}\label{bddtheta_n} 
	\sup_{\substack{t\in[0,T]\\n\in\mathbb{N}}}\mathbb{E}\tilde{\mathbb{E}}\left[\left\lvert b_n(t, X^n_t, \tilde{X}^n_t)\right\rvert^q+\left\lvert\sigma_n(t, X^n_t, \tilde{X}^n_t)\right\rvert^q\right]<\infty
	\end{equation}
	for some $q>2$. Assume one of the following hypotheses hold:
	\begin{itemize}
		\item[\textbf{Case A:} ] For every $t\in[0,T]$, the mappings $(x,y)\mapsto b(t,x,y), \sigma(t,x,y)$ are continuous and for every $R>0$,
		$b_n(t,\cdot,\cdot)\to b(t,\cdot,\cdot),\  \sigma_n(t,\cdot,\cdot)\to \sigma(t,\cdot,\cdot)$ as $n\to \infty$ in $C\left(B_R\times B_R\right)$.
		\item[\textbf{Case B:} ] The function $(t,x)\mapsto \sup_{n\in\mathbb{N}}\tilde{\mathbb{E}}b_n(t,x,\tilde{X}^n_t)$ is locally bounded and for every $R>0$,
		\begin{equation}\label{non-deg}
		\liminf_{n\to\infty}\left[\inf\left\lbrace h^T \sigma_n(t,x,y)\sigma_n^T(t,x,y)h: \left\lvert h \right\rvert=1; t\in[0,T]; \left\lvert x\right\rvert,\left\lvert y\right\rvert\leq R\right\rbrace\right]>0,
		\end{equation}
		and also
		$b_n\to b$ and $\sigma_n\to \sigma$ as $n\to \infty$ in $L^{4d+2}\left([0,T]\times B_R\times B_R,\lambda\right)$.
		
	\end{itemize}
	Here $B_R$ is the ball with radius $R$ centered at the origin and $\lambda$ is the Lebesgue measure on $\mathbb{R}^{2d+1}$. Then equation \eqref{equ3'} has a weak solution on $[0,T]$.
\end{theorem}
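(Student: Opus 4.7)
The plan is to obtain the weak solution of \eqref{equ3'} as a subsequential limit of the approximating sequence $(X^n)$, following a tightness / Skorokhod / identification scheme, with the two cases differing only in the final identification step. For tightness, the uniform bound \eqref{bddtheta_n} with $q>2$ combined with Jensen's inequality (applied to $\tilde{\mathbb{E}}$) and the Burkholder-Davis-Gundy inequality yields
\[
\mathbb{E}|X^n_t - X^n_s|^q \le C\bigl(|t-s|^{q} + |t-s|^{q/2}\bigr) \le C'|t-s|^{q/2}
\]
uniformly in $n$. Since $q/2>1$, Kolmogorov's criterion gives tightness of $\mathcal{L}(X^n)$ on $C([0,T];\mathbb{R}^d)$, and the joint laws of $(X^n,\tilde X^n, W)$ are tight as well. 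Prokhorov's theorem and Skorokhod's representation theorem then yield, on a new probability space, copies still denoted $(X^n,\tilde X^n, W^n)$ converging almost surely uniformly to a limit $(X,\tilde X,\bar W)$, in which $\bar W$ is a Brownian motion, $\tilde X$ is an independent copy of $X$ (independence being preserved by the joint construction), and $X_0$ has the law of $\xi$.

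The heart of the proof is then to identify the limit as a weak solution, i.e.\ to show
\[
\int_0^t \tilde{\mathbb{E}}\,b_n(s,X^n_s,\tilde X^n_s)\,\de s \longrightarrow \int_0^t \tilde{\mathbb{E}}\,b(s,X_s,\tilde X_s)\,\de s
\]
(and the analogue for the diffusion integral), so that the martingale-problem characterisation of \eqref{equ3'} passes to the limit. In Case A this is routine: continuity of $b,\sigma$ together with the locally uniform convergence of $b_n,\sigma_n$ on compacts gives pointwise a.s.\ convergence of the integrands, and the uniform $L^q$-bound \eqref{bddtheta_n} supplies the uniform integrability needed to pass to the limit via Vitali's theorem.

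Case B is where I expect the main obstacle. Here $b,\sigma$ may be discontinuous, so pointwise convergence is unavailable, and I would invoke a Krylov-type occupation-time estimate applied to the $2d$-dimensional process $(X^n_s,\tilde X^n_s)$, whose diffusion coefficient is block-diagonal (by independence of $X^n$ and $\tilde X^n$) and, by \eqref{non-deg}, uniformly elliptic on compacts. After localising on $\{|X^n_s|,|\tilde X^n_s|\le R\}$ using the moment bounds coming from \eqref{bddtheta_n} together with the local boundedness of $(t,x)\mapsto\sup_n\tilde{\mathbb{E}} b_n(t,x,\tilde X^n_t)$, Krylov's inequality provides a constant $C_R$ independent of $n$ with
\[
\mathbb{E}\int_0^T |b_n - b|(s,X^n_s,\tilde X^n_s)\,\de s \le C_R\,\|b_n-b\|_{L^{2d+1}([0,T]\times B_R\times B_R)},
\]
which tends to zero by hypothesis. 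A further Cauchy-Schwarz step, where the stronger $L^{4d+2}$-exponent enters, combined with a mollification of $b$ and the same Krylov estimate, allows me to replace $b(s,X^n_s,\tilde X^n_s)$ by $b(s,X_s,\tilde X_s)$ in the limit using the a.s.\ uniform convergence of the paths; the stochastic integral involving $\sigma_n,\sigma$ is handled by an entirely parallel argument via the It\^o isometry. Assembling these pieces identifies the limit as a weak solution of \eqref{equ3'}. The technical core, and the part most likely to require delicate verification, is the uniform-in-$n$ Krylov estimate for the joint process $(X^n,\tilde X^n)$ based solely on the information supplied by \eqref{bddtheta_n} and \eqref{non-deg}.
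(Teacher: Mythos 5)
Your proposal follows essentially the same route as the paper: tightness via the uniform $L^q$ bound, BDG and Kolmogorov's criterion, Skorokhod representation, identification of the limit through the martingale problem and martingale representation, with Case A handled by continuity plus uniform integrability and Case B by a Krylov occupation-time estimate for the joint $2d$-dimensional process $(X^n,\tilde X^n)$ localized on balls, the exponent $4d+2$ entering exactly because the Itô-isometry forces one to apply Krylov's bound to the \emph{squared} coefficient difference. The only step you describe somewhat loosely --- replacing $b(s,X^n_s,\tilde X^n_s)$ by $b(s,X_s,\tilde X_s)$ along the limit path, which the paper carries out by approximating $b_{n_{k_0}}-b$ with a continuous function simultaneously in $L^{4d+2}(\lambda)$ and in $L^2$ of the occupation measure of the limit process and then transferring the Krylov bound via lower semicontinuity --- is the same idea you indicate with your mollification remark, so the two arguments coincide in substance.
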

We will use the following lemma in the proof of case B, which is a consequence of the Krylov's estimate (see Theorem 2.2.4 in \cite{krylov2008controlled}). 
\begin{lemma}\label{KrylovEstimate}
	Consider the probability space $\left(\Omega,\mathcal{F},\left(\mathcal{F}_t\right)_{t\geq 0},\mathbb{P}\right)$ and  an $\left(\mathcal{F}_t\right)_{t\geq 0}$-Wiener process $(W_t)_{t\geq 0}$ on $\mathbb{R}^{d_1}$. Let $Z(t)=\int_0^t f(t,\omega)\de t+\int_0^t g(t,\omega)\de W_t$ be an It\^o process on $\mathbb{R}^d$ where $f,g:[0,T]\times \Omega \to \mathbb{R}^d,\mathbb{R}^{d\times  d_1}$ are $\mathcal{F}_t$-adapted stochastic processes. Let us denote the exit time of $Z$ from domain $D\subset \mathbb{R}^d$ by $\tau_D$, i.e.,
	\[
	\tau_D:=\inf\left\lbrace t\geq 0: Z(t)\notin D\right\rbrace. 
	\]
	Assume that there exist constants $K$ and $\delta$ such that for all $(t,\omega)\in [0,T]\times \Omega$ with the property $t< \tau_D(\omega)$, the following inequalities hold
	\[\left\lvert f(t,\omega)\right\rvert\leq K, \qquad \inf_{\left\lvert h\right\rvert=1}h^T g(t,\omega)g^T(t,\omega) h\geq \delta \]
	Then there exists a constant $N_{\delta,K,d,D}$ depending only on $\delta, K, d$ and the diameter of the region $D$ such that for any measurable function $u:[0,T]\times D\to \mathbb{R}$,  
	\[\mathbb{E}\left[\int_0^{T\wedge\tau_D} u(t,Z(t))\de t\right]\leq N_{\delta,K,d,D} \left(\int_{[0,T]\times D} \left\lvert u(t,x)\right\rvert^{d+1} \de t\,\de x\right)^{\frac{1}{d+1}}. \] 
\end{lemma}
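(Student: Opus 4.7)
The plan is to derive this lemma directly from the classical Krylov estimate (Theorem 2.2.4 of \cite{krylov2008controlled}) by extending the It\^o process $Z$ past the exit time $\tau_D$ so that the hypotheses of bounded drift and uniformly non-degenerate diffusion hold globally on $[0,T]\times\Omega$ rather than only on the stochastic interval $\{t<\tau_D\}$. Since the non-degeneracy condition forces $d_1\ge d$, I can fix a $d\times d_1$ matrix $P$ with $PP^T=I_d$ (for instance, a coordinate projection).

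First I would replace the coefficients by the adapted processes
\[\bar f(t,\omega):=f(t,\omega)\mathbf{1}_{\{t<\tau_D(\omega)\}},\qquad \bar g(t,\omega):=g(t,\omega)\mathbf{1}_{\{t<\tau_D(\omega)\}}+\sqrt{\delta}\,P\,\mathbf{1}_{\{t\ge\tau_D(\omega)\}},\]
which satisfy $|\bar f|\le K$ and $\inf_{|h|=1} h^T\bar g\bar g^T h\ge\delta$ uniformly on $[0,T]\times\Omega$. The resulting It\^o process $\bar Z(t):=Z(0)+\int_0^t\bar f\,\de s+\int_0^t\bar g\,\de W_s$ agrees with $Z(t)$ on $[0,\tau_D]$. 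Without loss of generality I may assume $u\ge 0$ (otherwise split into positive and negative parts, or pass to $|u|$); let $\bar u$ be the extension of $u$ by zero to $[0,T]\times\mathbb{R}^d$. Because $Z(t)\in D$ for every $t<\tau_D$ and $\bar u$ vanishes outside $D$,
\[\mathbb{E}\int_0^{T\wedge\tau_D}u(t,Z(t))\,\de t=\mathbb{E}\int_0^{T\wedge\tau_D}\bar u(t,\bar Z(t))\,\de t\le \mathbb{E}\int_0^T\bar u(t,\bar Z(t))\,\de t,\]
so the task is reduced to bounding the right-hand side by the $L^{d+1}$ norm of $u$ on $[0,T]\times D$.

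The second step is to apply Krylov's estimate to $\bar Z$ and $\bar u$. To ensure that the constant depends only on $\delta$, $K$, $d$ and the diameter of $D$ rather than on $T$ or on global growth of $\bar Z$, I would further stop $\bar Z$ at its exit time from a ball $B$ strictly containing $D$; since $\bar u$ is supported in $D$, this additional stopping does not change the integral on the left, and it places the problem squarely within the bounded-domain framework of Theorem 2.2.4 in \cite{krylov2008controlled}. That theorem then yields
\[\mathbb{E}\int_0^T\bar u(t,\bar Z(t))\,\de t\le N_{\delta,K,d,D}\left(\int_{[0,T]\times D}|u(t,x)|^{d+1}\,\de t\,\de x\right)^{1/(d+1)}.\]
The main obstacle is essentially bookkeeping: one must verify that the modified coefficients remain adapted (which is why $\tau_D$ being a stopping time is essential) and that Krylov's estimate, as stated in \cite{krylov2008controlled}, can indeed be localized to depend only on the diameter of $D$ — which is achieved by the auxiliary exit time from $B$ described above.
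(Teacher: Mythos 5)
Your derivation is correct and follows the paper's (implicit) route: the paper states this lemma without proof, simply as a consequence of Theorem 2.2.4 in \cite{krylov2008controlled}, and your device of extending $f,g$ past $\tau_D$ (using $d_1\ge d$ to insert $\sqrt{\delta}\,P$) together with localization to a ball $B\supset D$ is precisely the standard reduction needed to invoke that theorem with a constant depending only on $\delta,K,d$ and $\mathrm{diam}(D)$. One small imprecision: stopping at the exit time from $B$ \emph{does} in general change $\mathbb{E}\int_0^T\bar u(t,\bar Z(t))\,\de t$, since $\bar Z$ may re-enter $D$ after leaving $B$; however, because $\tau_D\le\tau_B$ and $\bar u\ge 0$, the quantity you actually need to bound, $\mathbb{E}\int_0^{T\wedge\tau_D}\bar u(t,\bar Z(t))\,\de t$, is still dominated by $\mathbb{E}\int_0^{T\wedge\tau_B}\bar u(t,\bar Z(t))\,\de t$, so the chain of inequalities closes.
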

\begin{proof}[Proof of Theorem \ref{abstractexistence}]
	First we prove tightness of distributions of $X^n$ on $C\left( [0,T], \mathbb{R}^d\right)$. Using
	\[X^n_t-X^n_s=\int_s^t \tilde{\mathbb{E}}b_n\left(u,X^n_u,\tilde{X}^n_u\right)\de u+\int_s^t \tilde{\mathbb{E}}\sigma_n\left(u,X^n_u,\tilde{X}^n_u\right)\de W_u\]
	and Burkholder-Davis-Gundy inequality, it follows that
	\begin{align*}
	\MoveEqLeft[0]\mathbb{E}\left\lvert X^n_t-X^n_s\right\rvert^q\\&\leq 2^{q-1}\mathbb{E}\left\lvert \int_s^t \tilde{\mathbb{E}}\,b_n\left(u,X^n_u,\tilde{X}^n_u\right)\de u\right\rvert^q +2^{q-1}\mathbb{E}\left\lvert  \int_s^t \tilde{\mathbb{E}}\,\sigma_n\left(u,X^n_u,\tilde{X}^n_u\right)\de W_u\right\rvert^q\\&\leq 2^{q-1}\left\lvert t-s\right\rvert^{q-1}\mathbb{E}\int_s^t \tilde{\mathbb{E}}\left\lvert b_n\left(u,X^n_u,\tilde{X}^n_u\right)\right\rvert^q\de u+2^{q-1}C\,\mathbb{E}\left[\int_s^t  \left\lvert\tilde{\mathbb{E}}\,\sigma_n\left(u,X^n_u,\tilde{X}^n_u\right) \right\rvert^2\de u\right]^{q/2}\\& \leq 2^{q-1}\left\lvert t-s\right\rvert^{q-1}\mathbb{E}\int_s^t \tilde{\mathbb{E}}\left\lvert b_n\left(u,X^n_u,\tilde{X}^n_u\right)\right\rvert^q\de u+2^{q-1}C\left\lvert t-s\right\rvert^{\frac{q}{2}-1}\mathbb{E}\int_s^t  \tilde{\mathbb{E}}\left\lvert\sigma_n\left(u,X^n_u,\tilde{X}^n_u\right)\right\rvert^{q} \de u\\ 
	& \leq C\left\lvert t-s\right\rvert^{\frac{q}{2}} \, . 
	\end{align*}
	Since $q>2$, the laws of $X^n$ in the space of $C\left([0,T],\mathbb{R}^d\right)$ are tight and there exist some subsequence $X^{n_k}$ which converges in law to some law $\mu$ on $C\left([0,T],\mathbb{R}^d\right)$. According to Skorokhod's theorem, there exist random variables say $\left(Y^{n_k},\tilde{Y}^{n_k}\right)$ given on some probability space $(\Omega,\mathcal{F},\mathbb{P})$ with the same distribution as $\left(X^{n_k},\tilde{X}^{n_k}\right)$ converging to some random variable $\left(Y,\tilde{Y}\right)$ having distribution $\mu\otimes \mu$. Let us define
	\[M^{n_k}_t:=Y^{n_k}_t-\int_0^t \tilde{\mathbb{E}}b_{n_k}(s,Y^{n_k}_s,\tilde{Y}^{n_k}_s)\de s.\]
	$\left(M^{n_k}_t\right)_{t\geq 0}$ is a martingale with quadratic variation
	\[N^{n_k}_t:=\int_0^t \tilde{\mathbb{E}}\sigma_{n_k}(s,Y^{n_k}_s,\tilde{Y}^{n_k}_s)\tilde{\mathbb{E}}\sigma_{n_k}^T(s,Y^{n_k}_s,\tilde{Y}^{n_k}_s)\de s.\]
	We have by Burkholder-Davis-Gundy inequality and \eqref{bddtheta_n} that
	\[\sup_{k\in\mathbb{N}}\mathbb{E}\left\lvert M^{n_k}_t\right\rvert^q\leq C\sup_{k\in\mathbb{N}} \mathbb{E}\left\lvert N^{n_k}_t\right\rvert^{q/2}\leq C_T.\]
	Let us  also define
	\[M_t:=Y_t-\int_0^t \tilde{\mathbb{E}}b(s,Y_s,\tilde{Y}_s)\de t,\]
	and
	\[N_t:=\int_0^t \tilde{\mathbb{E}}\sigma(s,Y_s,\tilde{Y}_s)\tilde{\mathbb{E}}\sigma^T(s,Y_s,\tilde{Y}_s)\de s.\]
	If we can show that $M^{n_k}_t\to M_t$ and $N^{n_k}_t\to N_t$ in probability, then we have for bounded continuous function $F:C([0,s],\mathbb{R}^d)\to \mathbb{R}$ and $v,u\in\mathbb{R}^d$
	\begin{align*}
	\mathbb{E}\left[ \left\langle M_t-M_s,v\right\rangle F(Y|_{[0,s]})\right]=\lim_{k\to\infty}\mathbb{E}\left[ \left\langle M^{n_k}_t-M^{n_k}_s,v\right\rangle F(Y^{n_k}|_{[0,s]})\right]=0
	\end{align*}
	and also
	\begin{align*}
	&\mathbb{E}\left[ \left(\left\langle M_t-M_s,v\right\rangle \left\langle M_t-M_s,u\right\rangle-v^TN_t u\right) F(Y|_{[0,s]})\right]\\&=\lim_{k\to\infty}\mathbb{E}\left[ \left(\left\langle M^{n_k}_t-M^{n_k}_s,v\right\rangle \left\langle M^{n_k}_t-M^{n_k}_s,u\right\rangle-v^TN^{n_k}_t u\right) F(Y^{n_k}|_{[0,s]})\right]=0
	\end{align*}
	So $M_t$ is a martingale with quadratic variation $N_t$ and the proof is completed by using the martingale representation theorem. Now we continue the proof for each set of assumptions separately.
	\\
	\paragraph{Case A:} For $\Theta\in \left\lbrace b, \sigma\right\rbrace$, we have
	\begin{align*}
	\left\lvert \Theta_{n_k}(t,Y^{n_k}_t,\tilde{Y}^{n_k}_t)-\Theta(t,Y_t,\tilde{Y}_t)\right\rvert &\leq \left\lvert \Theta_{n_k}(t,Y^{n_k}_t,\tilde{Y}^{n_k}_t)-\Theta(t,Y^{n_{k}}_t,\tilde{Y}^{n_{k}}_t)\right\rvert\\&\quad+\left\lvert \Theta(t,Y^{n_{k}}_t,\tilde{Y}^{n_{k}}_t)-\Theta(t,Y_t,\tilde{Y}_t)\right\rvert.
	\end{align*}
	Since the sequence $(Y^{n_k}_t,\tilde{Y}^{n_k}_t)$ tends to $(Y_t,\tilde{Y}_t)$ almost surely as $k\to\infty$, it is a bounded sequence in $\mathbb{R}^{2d}$ almost surely and the right hand side of inequality above tends to zero as $k\to\infty$. So by uniform integrability, we get the convergence of $M^{n_k}_t\to M_t$ and $N^{n_k}_t\to N_t$ in $L^1$ as $k\to\infty$.
	\\
	\paragraph{Case B:} Let
	\[\tau_{n_k}(R):=\inf\left\lbrace t\geq 0: \left\lvert Y^{n_k}_t\right\rvert\vee \left\lvert \tilde{Y}^{n_k}_t\right\rvert>R\right\rbrace, \quad \tau(R):=\inf\left\lbrace t\geq 0: \left\lvert Y_t\right\rvert\vee \left\lvert \tilde{Y}_t\right\rvert>R\right\rbrace,\]
	and $\bar{\tau}(R):=\liminf_{k\to\infty}{\tau_{n_k}(R)}$.
	Since $\left(Y^{n_k},\tilde{Y}^{n_k}\right)$ tends to $(Y,\tilde{Y})$ in $C([0,T],\mathbb{R}^d)$, $\bar{\tau}(R)\leq \tau(R)$. We have
	\begin{align*}
	\mathbb{E}\left[\sup_{t\in[0,T]}\left\lvert X^{n}_t\right\rvert^2\right]&\leq C\mathbb{E}\left(\left\lvert \xi\right\rvert^2\right)+CT\mathbb{E}\int_0^T \tilde{\mathbb{E}} \left\lvert b_n\left(t,X^n_t,\tilde{X}^n_t\right)\right\rvert^2\de t\\&\quad+C\mathbb{E}\left[\int_0^T \tilde{\mathbb{E}} \left\lvert \sigma_n\left(t,X^n_t,\tilde{X}^n_t\right)\right\rvert^2\de t\right]\leq C_T.
	\end{align*}
	So the stopping times $\tau_{n_k}(R)$ satisfy 
	\begin{equation}\label{tau}
	\lim_{R\to\infty} \limsup_{k\to\infty}\mathbb{P}\otimes \tilde{\mathbb{P}}\left(\tau_{n_k}(R)<T\right)=0.
	\end{equation}
	We have
	\begin{align*}
	\MoveEqLeft[0]\mathbb{P}\otimes\tilde{\mathbb{P}}\left(\int_0^T \left\lvert \Theta_{n_k}(t,Y^{n_k}_t,\tilde{Y}^{n_k}_t)-\Theta(t,Y_t,\tilde{Y}_t)\right\rvert^2\de t>\delta\right)\\&\leq \mathbb{P}\otimes\tilde{\mathbb{P}}\left(T> \tau_{n_k}(R)\wedge \bar{\tau}(R)\right)
	\\&\quad+\mathbb{P}\otimes\tilde{\mathbb{P}}\left(T\leq\tau_{n_k}(R)\wedge \bar{\tau}(R);\int_0^T \left\lvert \Theta_{n_{k_0}}(t,Y_t,\tilde{Y}_t)-\Theta_{n_{k_0}}(t,Y^{n_{k}}_t,\tilde{Y}^{n_{k}}_t)\right\rvert^2\de t>\delta/9\right)
	\\&\quad +\mathbb{P}\otimes\tilde{\mathbb{P}}\left(T\leq\tau_{n_k}(R)\wedge \bar{\tau}(R);\int_0^T \left\lvert \Theta_{n_k}(t,Y^{n_k}_t,\tilde{Y}^{n_k}_t)-\Theta_{n_{k_0}}(t,Y^{n_{k}}_t,\tilde{Y}^{n_{k}}_t)\right\rvert^2\de t>\delta/9\right)
	\\&\quad +\mathbb{P}\otimes\tilde{\mathbb{P}}\left(T\leq\tau_{n_k}(R)\wedge \bar{\tau}(R);\int_0^T \left\lvert \Theta_{n_{k_0}}(t,Y_t,\tilde{Y}_t)-\Theta(t,Y_t,\tilde{Y}_t)\right\rvert^2 \de t>\delta/9\right)
	\\&= I_1+I_2+I_3+I_4,\text{ say.}
	\end{align*}
	Now observe that 
	\begin{align*}
	I_1&\leq\mathbb{P}\otimes\tilde{\mathbb{P}}(\tau_{n_k}(R)<T)+\mathbb{P}\otimes\tilde{\mathbb{P}}(\bar{\tau}(R)<T)\\&\leq\mathbb{P}\otimes\tilde{\mathbb{P}}(\tau_{n_k}(R)<T)+\limsup_{l\to\infty}\mathbb{P}\otimes\tilde{\mathbb{P}}(\tau_{n_l}(R)<T).
	\end{align*}
	From \eqref{tau} we obtain that $\lim_{R\to\infty} \limsup_{k\to \infty} I_1=0$.
	
	By continuity of $\Theta_{n_k}$, it is clear that for fixed $k_0$, the second term, i.e. $I_2$ tends to zero as $k\to\infty$. To take the limit of $I_3$ and $I_4$, we use Lemma \ref{KrylovEstimate}. Since  $(t,x)\mapsto \sup_{n\in\mathbb{N}}\tilde{\mathbb{E}}b_n(t,x,\tilde{Y}^n_t)$ is locally bounded, for $t\leq  T\wedge\tau_{n_k}(R)$, $\sup_{k\in\mathbb{N}}\tilde{\mathbb{E}}b_{n_k}\left(t,Y^{n_k}_t,\tilde{Y}^{n_k}_t\right)$ is bounded. Inequality \eqref{non-deg} implies that there exists $K_R\in\mathbb{N}$ such that for all $k\geq K_R$,
	\[
	\inf_{\substack{t\in [0, T\wedge\tau_{n_k}(R)]\\\left\lvert h\right\rvert \leq 1}}h^T 
	\sigma_{n_k}\left(t,Y^{n_k}_t,\tilde{Y}^{n_k}_t\right) \sigma^T_{n_k}\left(t,Y^{n_k}_t,
	\tilde{Y}^{n_k}_t\right) h\geq \varepsilon(R,T)>0. 
	\] 
	Therefore the conditions of Lemma \ref{KrylovEstimate} for It\^o process $\left(Y^{n_k}_t,
	\tilde{Y}^{n_k}_t\right)$ and the exit time $\tau_{n_k}(R)$ hold for all $k\geq K_R$ and there exists a 
	constant $C(R,T)$ such that,
	\begin{align*}
	I_3 
	& \leq \frac{9}{\delta}\mathbb{E}\tilde{\mathbb{E}}\left( \int_0^{T\wedge\tau_{n_k}(R)} 
	\left\lvert \Theta_{n_k}(t,Y^{n_k}_t,\tilde{Y}^{n_k}_t)-\Theta_{n_{k_0}}(t,Y^{n_{k}}_t,
	\tilde{Y}^{n_{k}}_t)\right\rvert^2\de t
	\right)
	\\ 
	& \leq C(R,T) \left( \int_{[0,T]\times B_R\times B_R}
	\left\lvert \Theta_{n_k}(t,x,y)-\Theta_{n_{k_0}}(t,x,y)\right\rvert^{4d+2}\de t\de x\de y
	\right)^{\frac{1}{2d+1}}\to 0,
	\end{align*}
	which tends to zero as  $k, k_0\to \infty$ since $\Theta_n\to \Theta$ in $L_{loc}^{4d+2}$ as 
	$n\to \infty$. 
	Let $w\in C \left( \mathbb{R}^d\times\mathbb{R}^d, \mathbb{R}\right)$ be compactly supported with 
	$1_{B_R\times B_R} \le w\le 1$. Then
	\begin{align*}
	I_4 
	& \leq \frac{9}{\delta}\mathbb{E}\tilde{\mathbb{E}}\int_0^{T\wedge \bar{\tau}(R)} w(Y_t,\tilde{Y}_t) \left\lvert \Theta_{n_{k_0}}(t,Y_t,\tilde{Y}_t)-\Theta(t,Y_t,\tilde{Y}_t)\right\rvert^2 \de t.
	\end{align*}
	Since continuous functions are dense in
	\[L^2\left([0,T]\times B_R\times B_R,\mu\right)\cap L^{4d+2}\left([0,T]\times B_R\times B_R,\lambda\right),\]
	where $\lambda$ is the Lebesgue measure and $\mu$ is the following finite Borel measure,
	\[\mu(A):=\mathbb{E}\tilde{\mathbb{E}}\int_0^T\mathbf{1}_{\{(t,Y_t,\tilde{Y}_t)\in A\}} w(Y_t,\tilde{Y}_t) \de t,\]
	we can find for every $\varepsilon>0$, a continuous function $g$ on $[0,T]\times\mathbb{R}^d\times\mathbb{R}^d$ such that
	\begin{align*}
	&\left(\mathbb{E}\tilde{\mathbb{E}}\int_0^{T} w(Y_t,\tilde{Y}_t) \left\lvert \Theta_{n_{k_0}}(t,Y_t,\tilde{Y}_t)-\Theta(t,Y_t,\tilde{Y}_t)-g(t,Y_t,\tilde{Y}_t)\right\rvert^2 \de t\right)^{1/2}\\&+\left(\int_0^T \int_{B_R}\int_{B_R}\left\lvert \Theta_{n_{k_0}}(t,x,y)-\Theta(t,x,y)-g(t,x,y)\right\rvert^{4d+2}\de x\de y\de t\right)^{\frac{1}{4d+2}}\leq \varepsilon.
	\end{align*}
	So
	\begin{align*}
	\frac{(\delta I_4)^{1/2}}{3}&\leq \left( \mathbb{E}\tilde{\mathbb{E}}\int_0^{T\wedge \bar{\tau}(R)} w(Y_t,\tilde{Y}_t) \left\lvert g(t,Y_t,\tilde{Y}_t) \right\rvert^2\de t\right)^{1/2}+\varepsilon \\ 
	& = \left( \mathbb{E}\tilde{\mathbb{E}}\int_0^T\mathbf{1}_{\{t< \bar{\tau}(R)\}} w(Y_t,\tilde{Y}_t) \left\lvert g(t,Y_t,\tilde{Y}_t) \right\rvert^2\de t\right)^{1/2}+\varepsilon\\ 
	&\leq \liminf_{l\to\infty}\left( \mathbb{E}\tilde{\mathbb{E}}\int_0^T\mathbf{1}_{\{t\leq \tau_{n_l}(R)\}} w(Y^{n_l}_t,\tilde{Y}^{n_l}_t) \left\lvert g(t,Y^{n_l}_t,\tilde{Y}^{n_l}_t)\right\rvert^2\de t\right)^{1/2}+\varepsilon.
	\end{align*}
	Thus, we get for large enough $l\in\mathbb{N}$,
	\begin{align*}
	\frac{(\delta I_4)^{1/2}}{3}&\leq \left(\mathbb{E}\tilde{\mathbb{E}}\int_0^{T\wedge\tau_{n_l}(R)} w(Y^{n_l}_t,\tilde{Y}^{n_l}_t) \left\lvert g(t,Y^{n_l}_t,\tilde{Y}^{n_l}_t)\right\rvert^2\de t\right)^{1/2}+2\varepsilon
	\end{align*}
	Then by Lemma \ref{KrylovEstimate}, we have
	\begin{align*}
	\frac{(\delta I_4)^{1/2}}{3}&\leq C(R,T)\left\lvert g\right\rvert_{L^{4d+2}([0,T]\times B_R\times B_R,\lambda)}+2 \varepsilon\\&\leq C(R,T)\left(\left\lvert \Theta_{n_{k_0}}-\Theta\right\rvert_{L^{4d+2}([0,T]\times B_R\times B_R,\lambda)}+\varepsilon\right)+2 \varepsilon
	\end{align*}
	So, $I_4$ also tends to zero as $k_0\to\infty$. Hence
	\[\int_0^T \left\lvert \Theta_{n_k}(t,Y^{n_k}_t,\tilde{Y}^{n_k}_t)-\Theta(t,Y_t,\tilde{Y}_t)\right\rvert^2\de t\to 0\quad \text{ in probability},\]
	as $k\to\infty$ and therefore $M^{n_k}_t\to M_t$ and $N^{n_k}_t\to N_t$ almost surely.
\end{proof}
\begin{remark}
	The proof of Theorem \ref{abstractexistence} is shorter than the proof of weak existence theorem in \cite{mishura2016existence} because in case B, we estimated $b,\sigma$ in the smaller space $L^{4d+2}$ instead of $L^{2d+1}$ and also we used the representation theorem for martingales. In fact Theorem \ref{abstractexistence} is more general than the weak existence result stated in \cite{mishura2016existence} and to prove that, it is enough to approximate $b,\sigma$ in the space $L^{4d+2}$ instead of $L^{2d+1}$. 
\end{remark}

\begin{theorem}\label{theorem4-1}
	Let $b,\sigma:[0,\infty)\times\mathbb{R}^d\times\mathbb{R}^d\to \mathbb{R}^d, \mathbb{R}^{d\times d_1}$ be measurable. Consider the equation
	\begin{equation} \label{equ3-1}
	\de X_t=\tilde{\mathbb{E}}b(t,X_t,\tilde{X}_t)\de t+\tilde{\mathbb{E}}\sigma(t,X_t,\tilde{X}_t)\de W_t
	\end{equation}
	with initial value $X_0=\xi$. Assume that there exists a convex function $V\in C^2\left( \mathbb{R}^d, [0,\infty)\right)$ such that for some $q>2$,
	\begin{enumerate}[label=(H\theenumi)]
		\item \label{H1}$\left\langle \nabla V(x), b(t, x,y)\right\rangle +\frac{1}{2}\mathrm{tr} \left( \sigma^T(t,x,y)\mathrm{D}^2V(x)\sigma(t,x,y)\right)< CV(x),$
		\item \label{H2}$\left\lvert b(t,x,y)\right\rvert^q+\left\lvert \sigma(t,x,y)\right\rvert^q <  V(x)V(y),$
		\item \label{H3}$\mathbb{E} V(\xi)+\mathbb{E}\left\lvert \xi\right\rvert^2<\infty.$
	\end{enumerate}
	Also assume that $(x,y)\mapsto b(t,x,y), \sigma(t,x,y)$ are continuous or $\sigma$, for every $R>0$, satisfies
	\[\inf_{t\in[0,T],\left\lvert x\right\rvert<R,\mu\in\mathcal{P}}\inf_{\left\lvert \lambda\right\rvert=1}\lambda^T\int_{\mathbb{R}^d}\sigma(t,x,y)\mu(\de y)\int_{\mathbb{R}^d}\sigma^T(t,x,y)\mu(\de y)\lambda>0.\]
	where $\mathcal{P}$ is the space of probability measures on $\left(\mathbb{R}^d, \mathcal{B}\left(\mathbb{R}^d\right)\right)$. Then \eqref{equ3-1} has a weak solution, say $X$, satisfying $\mathbb{E}V(X_t)\leq e^{Ct}\mathbb{E}V(\xi)$.
\end{theorem}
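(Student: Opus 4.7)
The approach is to invoke Theorem~\ref{abstractexistence}. To that end, I will build measurable, $(x,y)$-continuous approximations $(b_n,\sigma_n)$ such that each approximating Vlasov--McKean equation has a weak solution $X^n$ with the uniform bound~\eqref{bddtheta_n}, and so that either the Case~A (continuity) or Case~B (non-degeneracy) hypothesis of Theorem~\ref{abstractexistence} holds. Concretely, let $\chi_n:\mathbb{R}^d\to[0,1]$ be smooth cutoffs with $\chi_n\equiv 1$ on $B_n$ and $\chi_n\equiv 0$ off $B_{n+1}$, and set $\rho_n(x,y):=\chi_n(x)\chi_n(y)$. In the sub-case where $(x,y)\mapsto b,\sigma$ are continuous, take $b_n:=\rho_n b$, $\sigma_n:=\rho_n\sigma$; these are bounded continuous with compact support and converge to $b,\sigma$ in $C(B_R\times B_R)$ for every $R>0$. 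In the non-degenerate sub-case, mollify $b$ and $\sigma$ in $(x,y)$ by a smooth kernel of width $1/n$; since taking $\mu=\delta_y$ in the non-degeneracy hypothesis yields pointwise uniform non-degeneracy of $\sigma\sigma^T$ on bounded sets, and \ref{H2} gives local boundedness of $b,\sigma$, the mollified $\sigma_n$ remain uniformly non-degenerate on compact sets for $n$ large, and $b_n\to b$, $\sigma_n\to\sigma$ in $L^{4d+2}_{\mathrm{loc}}$.

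The core step is the uniform Lyapunov bound $\sup_n\sup_{t\in[0,T]}\mathbb{E}V(X^n_t)<\infty$. Applying It\^o's formula to $V(X^n_t)$ gives a drift of the form $\langle\nabla V(X^n_t),\tilde{\mathbb{E}}b_n\rangle+\tfrac12\mathrm{tr}\bigl((\tilde{\mathbb{E}}\sigma_n)(\tilde{\mathbb{E}}\sigma_n)^T D^2V(X^n_t)\bigr)$. Convexity of $V$ makes $D^2V\ge 0$ and the map $M\mapsto\mathrm{tr}(MM^T D^2V)$ convex, so Jensen yields
\[
\mathrm{tr}\bigl((\tilde{\mathbb{E}}\sigma_n)(\tilde{\mathbb{E}}\sigma_n)^T D^2V\bigr)\le \tilde{\mathbb{E}}\mathrm{tr}(\sigma_n\sigma_n^T D^2V),
\]
and hence the drift is dominated by $\tilde{\mathbb{E}}\bigl[\langle\nabla V,b_n\rangle+\tfrac12\mathrm{tr}(\sigma_n^T D^2V\sigma_n)\bigr]$. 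This in turn is $\le CV(X^n_t)$: for the cutoff approximation, $\rho_n^2\le\rho_n\le 1$ together with $\mathrm{tr}(\sigma^T D^2V\sigma)\ge 0$ and the nonnegativity of the right-hand side of \ref{H1} give
\[
\rho_n\langle\nabla V,b\rangle+\rho_n^2\,\tfrac12\mathrm{tr}(\sigma^T D^2V\sigma)\le\rho_n\bigl[\langle\nabla V,b\rangle+\tfrac12\mathrm{tr}(\sigma^T D^2V\sigma)\bigr]\le\rho_n CV\le CV;
\]
an analogous argument applies to the mollified approximation, with the mollification error absorbed into $C$ via local boundedness of $b,\sigma$. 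After localizing the martingale part by stopping times $\sigma_k\uparrow\infty$ and applying Gr\"onwall plus Fatou, one obtains $\mathbb{E}V(X^n_t)\le e^{CT}\mathbb{E}V(\xi)$, which is finite by \ref{H3}. Combining with \ref{H2} yields $\mathbb{E}\tilde{\mathbb{E}}[|b_n|^q+|\sigma_n|^q]\le\mathbb{E}\tilde{\mathbb{E}}[V(X^n_t)V(\tilde{X}^n_t)]\le(\mathbb{E}V(X^n_t))^2\le e^{2CT}(\mathbb{E}V(\xi))^2$ uniformly in $n$ and $t$, which is precisely~\eqref{bddtheta_n}.

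Weak existence of each $X^n$ is standard for bounded (or moderately growing) continuous coefficients, e.g.\ via a Skorokhod / martingale-problem construction combined with a measure-flow fixed point. Theorem~\ref{abstractexistence} (Case~A or Case~B, as appropriate) then furnishes a weak solution $X$ of~\eqref{equ3-1}, and the bound $\mathbb{E}V(X_t)\le e^{Ct}\mathbb{E}V(\xi)$ passes to the limit by Fatou's lemma applied to $V\wedge R$ and then letting $R\to\infty$. The main obstacle is verifying that the Lyapunov inequality \ref{H1} survives the approximation uniformly in~$n$; what makes this tractable is the convexity of $V$, which renders the quadratic diffusion contribution nonnegative and therefore always in the ``good'' direction under both multiplicative cutoffs and mollification, so that the inequality is deflated rather than enhanced.
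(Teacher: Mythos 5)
Your overall strategy coincides with the paper's: build $(x,y)$-continuous approximations, prove the uniform Lyapunov bound $\sup_n\sup_{t\in[0,T]}\mathbb{E}V(X^n_t)<\infty$ by exploiting convexity of $V$ (so that the quadratic diffusion contribution obeys Jensen in the ``good'' direction), deduce \eqref{bddtheta_n} from \ref{H2} and independence, and invoke Theorem \ref{abstractexistence}. Your cutoff computation in the continuous sub-case, using $\rho_n^2\le\rho_n$ together with $\mathrm{tr}(\sigma^T\mathrm{D}^2V\sigma)\ge 0$, is essentially the paper's device of weighting $b$ by $\psi^2$ and $\sigma$ by $\psi$. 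There are, however, two genuine gaps. The more serious one is in the non-degenerate sub-case, where you mollify \emph{without} a cutoff and claim the error in \ref{H1} is ``absorbed into $C$ via local boundedness of $b,\sigma$.'' After pulling the mollifier outside by Jensen, what \ref{H1} controls is $\int\langle\nabla V(x+\tilde x),b(t,z+\tilde z)\rangle\phi_r(\tilde z)\,\de\tilde z$ plus the corresponding trace term, whereas the It\^o drift involves $\nabla V(x)$ and $\mathrm{D}^2V(x)$ at the unshifted point; the discrepancy is of order $\sup_{|\tilde x|\le 1/r}|\nabla V(x)-\nabla V(x+\tilde x)|\cdot\sup|b|$ and can be made small uniformly in $x$ only on a compact set. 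This is precisely why the paper multiplies the mollified coefficients by $\psi(\cdot/n)$: for each fixed $n$ the support is compact, so a sufficiently small mollification radius $r_n^{-1}$ yields \ref{H1} and \ref{H2} up to an additive $\varepsilon$. Without the cutoff your mollified coefficients are also unbounded in general.

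The second gap is your assertion that weak existence for each approximating equation is ``standard.'' With cutoff-only coefficients (continuous but not Lipschitz) or mollification-only coefficients (locally Lipschitz but possibly unbounded), existence of $X^n$ is itself a nontrivial McKean--Vlasov existence statement of the same flavour as the theorem being proved, so the argument becomes circular unless you import an external result for continuous coefficients. The paper's combined cutoff-plus-mollification produces bounded, globally Lipschitz $b_n,\sigma_n$, for which existence and uniqueness is genuinely classical. Both gaps are repaired by adopting the paper's approximation $b_{n,r_n}=\psi^2(\cdot/n)\,(b*\phi_{r_n})$, $\sigma_{n,r_n}=\psi(\cdot/n)\,(\sigma*\phi_{r_n})$ with $r_n$ chosen large depending on $n$; the remainder of your argument (uniform moment bound, verification of Case A or Case B, passage to the limit by Fatou) then goes through as written.
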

\begin{proof}
	We define for $n\in \mathbb{N}$ and $z=(x,y)$  the following globally Lipschitz continuous functions
	\begin{align*}
	b_{n,r}(t,z)&:=\psi^2\left(\frac{z}{n}\right)\int_{\mathbb{R}^{2d}}b(t,z+\tilde{z})r^{2d}\phi(r\tilde{z})\de \tilde{z},\\
	\sigma_{n,r}(t,z)&:=\psi\left(\frac{z}{n}\right)\int_{\mathbb{R}^{2d}}\sigma(t,z+\tilde{z})r^{2d}\phi(r\tilde{z})\de \tilde{z},
	\end{align*}
	where $0\leq \psi\leq 1$ and $0\leq \phi$ are compactly supported radial smooth functions with $\psi|_{B_1}=1$ and $\int_{\mathbb{R}^{2d}}\phi(x)\de x=1$. 
	Since $\mathrm{D}^2V$ is positive semi-definite, the function
	\[\sigma\mapsto\frac{1}{2}\mathrm{tr}\left(\sigma^TD^2V(x)\sigma\right)\]
	is convex. Since $V\in C^2(\mathbb{R}^d,[0,\infty))$, for an arbitrary $\varepsilon>0$, there exists $r_n>0$ large enough such that
	\begin{align*}
	\MoveEqLeft[1]\left\langle \nabla V(x), b_{n,r_n}(t,z)\right\rangle+\frac{1}{2}\mathrm{tr}\left(\sigma^T_{n,r_n}(t,z)D^2V(x)\sigma_{n,r_n}(t,z)\right)\\&\leq\psi^2\left(\frac{z}{n}\right)\Bigg[\left\langle \nabla V(x),\int_{\mathbb{R}^{2d}} b(t,z+\tilde{z})r_n^{2d}\phi(r_n\tilde{z})\de \tilde{z}\right\rangle\\&\quad+\frac{1}{2}\mathrm{tr}\left(\int_{\mathbb{R}^{2d}} \sigma^T(t,z+\tilde{z})r_n^{2d}\phi(r_n\tilde{z})\de \tilde{z}\, D^2V(x)\int_{\mathbb{R}^{2d}} \sigma(t,z+\tilde{z})r_n^{2d}\phi(r_n\tilde{z})\de \tilde{z}\right)\Bigg]
	\\&\leq
	\psi^2\left(\frac{z}{n}\right)\int_{\mathbb{R}^{2d}}\Big[\left\langle \nabla V(x), b(t,z+\tilde{z})\right\rangle+\frac{1}{2}\mathrm{tr}\left(\sigma^T(t,z+\tilde{z})D^2V(x)\sigma(t,z+\tilde{z})\right)
	\Big]r_n^{2d}\phi(r_n\tilde{z})\de \tilde{z}
	\\& \leq
	\psi^2\left(\frac{z}{n}\right)\int_{\mathbb{R}^{2d}}\Big[\left\langle \nabla V(x+\tilde{x}), b(t,z+\tilde{z})\right\rangle\\&\qquad\qquad\qquad\quad+\frac{1}{2}\mathrm{tr}\left(\sigma^T(t,z+\tilde{z})D^2V(x+\tilde{x})\sigma(t,z+\tilde{z})\right)
	\Big]r_n^{2d}\phi(r_n\tilde{z})\de \tilde{z}+\frac{\varepsilon}{2}\\&\leq C\psi^2\left(\frac{z}{n}\right)\int_{\mathbb{R}^{2d}}V(x+\tilde{x})r_n^{2d}\phi(r_n\tilde{z})\de \tilde{z}+\frac{\varepsilon}{2}\leq CV(x)+\varepsilon
	\end{align*}
	The same argument implies
	\[ \left\lvert b_{n,r_n}(t,x,y)\right\rvert^q+\left\lvert \sigma_{n,r_n}(t,x,y)\right\rvert^q <  V(x)V(y)+\varepsilon,\]
	Let us take $r_n >0$ large enough as above and $r_n\to\infty$ as $n\to\infty$. It is clear that $b_n=b_{n,r_n}$ and $\sigma_n=\sigma_{n,r_n}$ are bounded and globally Lipschitz. Therefore there exists a unique solution to 
	\begin{equation}
	\de X^n_t=\tilde{\mathbb{E}}b(t,X^n_t,\tilde{X}^n_t)\de t+\tilde{\mathbb{E}}\sigma(t,X^n_t,\tilde{X}^n_t)\de W_t
	\end{equation}
	with any arbitrary $\mathcal{F}_0$ measurable initial value $X^n_0=\xi$. To show that the assumptions of Theorem \ref{abstractexistence} hold, it is sufficient to prove that $\sup_{n\in\mathbb{N}}\mathbb{E} V(X^n_t)<C_T$ for all $t\in[0,T]$. We have by convexity of $V$ and \ref{H1},
	\begin{align*}
e^{-Ct}V(X^n_t)&=V(\xi)+ \int_0^t e^{-Cs}\Bigg[ \left\langle \nabla V(X^n_s),\tilde{\mathbb{E}} b_n\left(s,X^n_s,\tilde{X}^n_s\right)\right\rangle\\&\qquad+\frac{1}{2}\mathrm{tr}\left(\tilde{\mathbb{E}}\sigma^T_n\left(s,X^n_s,\tilde{X}^n_s\right)\mathrm{D}^2 V(X^n_s)\tilde{\mathbb{E}}\sigma_n\left(s,X^n_s,\tilde{X}^n_s\right)\right)-CV(X^n_s)\Bigg]\de s+M_t
	\\& \leq V(\xi)+ \tilde{\mathbb{E}}\int_0^t e^{-Cs}\Bigg[ \left\langle \nabla V(X^n_s), b_n\left(s,X^n_s,\tilde{X}^n_s\right)\right\rangle\\&\qquad+\frac{1}{2}\mathrm{tr}\left(\sigma^T_n\left(s,X^n_s,\tilde{X}^n_s\right)\mathrm{D}^2 V(X^n_s)\sigma_n\left(s,X^n_s,\tilde{X}^n_s\right)\right)-CV(X^n_s)\Bigg]\de s+M_t
	\\&\leq V(\xi)+ \int_0^t \varepsilon e^{-Cs}\de s+M_t=V(\xi)+\frac{\varepsilon}{C}\left(1-e^{-Ct}\right)+M_t
	\end{align*} 
	where $M_t$ is a local martingale starting from zero. Let $\tau_m\uparrow \infty$ be a corresponding localizing sequence. Then by Fatou's lemma,
	\begin{align*}
	e^{-Ct}\mathbb{E}V(X^n_t)&\leq \liminf_{m\to\infty}\mathbb{E}\left[ e^{-C(t\wedge\tau_m)} V(X^n_{t\wedge\tau_m})\right]= \mathbb{E}V(\xi)+\frac{\varepsilon}{C}\left(1-e^{-Ct}\right),
	\end{align*}
	and therefore,
	\[\mathbb{E}V(X^n_t)\leq e^{Ct}\mathbb{E}V(\xi)+\frac{\varepsilon}{C}\left(e^{Ct}-1\right).\]
	By Theorem \ref{theorem4-1}, there exist a weak solution to \eqref{equ3-1} like $X_t$ and  some subsequence $X^{n_k}$ which converges in law to  $X$ on $C([0,T],\mathbb{R}^d)$ as $k\to\infty$. Hence $\mathbb{E}V(X_t)\leq e^{Ct}\mathbb{E}V(\xi)$.
\end{proof}

\begin{corollary}\label{theorem4}
	Let $b,\sigma:[0,\infty)\times\mathbb{R}^d\times\mathbb{R}^d\to \mathbb{R}^d, \mathbb{R}^{d\times d_1}$ be measurable. Consider the equation
	\begin{equation} \label{equ3}
	\de X_t=\tilde{\mathbb{E}}b(t,X_t,\tilde{X}_t)\de t+\tilde{\mathbb{E}}\sigma(t,X_t,\tilde{X}_t)\de W_t
	\end{equation}
	with initial value $X_0=\xi$. Here $\tilde{X}_t$ is an independent copy of $X_t$. Suppose that one of the following assumptions holds for $q>2$:
	\begin{itemize}
		\item[1.] Assume that there exists $\alpha\geq 1$ such that
		\[\left\lvert x\right\rvert^2\left(2\left\langle x,b(t,x,y)\right\rangle+\left\lvert \sigma(t,x,y)\right\rvert^2\right)+(\alpha-2)\left\lvert\sigma^T (t,x,y)x\right\rvert^2 \leq C(1+\left\lvert x\right\rvert^4),\]
		\[\left\lvert b(t,x,y)\right\rvert^q+\left\lvert \sigma(t,x,y)\right\rvert^q\leq C(1+\left\lvert x\right\rvert^\alpha)(1+\left\lvert y\right\rvert^\alpha),\]
		and $\mathbb{E}\left( \left\lvert \xi\right\rvert^{\alpha\vee 2}\right)<\infty$.
		\item[2.] Assume that there exist $p\in[1,2]$ and $\alpha>0$ such that
		\[\left\lvert x\right\rvert^2\left(2\left\langle x,b(t,x,y)\right\rangle+\left\lvert \sigma(t,x,y)\right\rvert^2\right)+\left(\alpha p\left\lvert x\right\rvert^p+p-2\right) \left\lvert \sigma^T(t,x,y)x\right\rvert^2 \leq C(1+\left\lvert x\right\rvert^{4-p}),\]
		\[\left\lvert b(t,x,y)\right\rvert^q+\left\lvert \sigma(t,x,y)\right\rvert^q\leq C\exp(\alpha\left\lvert x\right\rvert^p+\alpha\left\lvert y\right\rvert^p),\]
		and $\mathbb{E}\left[\exp(\alpha\left\lvert \xi\right\rvert^p)\right]<\infty$.
	\end{itemize}
	Also assume that $(x,y)\mapsto b(t,x,y), \sigma(t,x,y)$ are continuous or $\sigma$ is symmetric and uniformly positive definite, i.e.,
	\[\inf_{s,x,y}\inf_{\left\lvert \lambda\right\rvert=1}\lambda^T\sigma(s,x,y)\lambda>0.\]
	Then equation \eqref{equ3} has a weak solution.
\end{corollary}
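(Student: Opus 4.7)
The plan is to apply Theorem \ref{theorem4-1} by constructing a convex Lyapunov function $V \in C^2(\mathbb{R}^d,[0,\infty))$ tailored to each case, verifying the three hypotheses \ref{H1}--\ref{H3}, and separately confirming the structural alternative of Theorem \ref{theorem4-1} from the uniform positive definiteness of $\sigma$.

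For case~1, I would take $V$ convex with $V(x) = 1 + |x|^\alpha$ for $|x|\geq 1$, smoothly interpolated on the unit ball to stay convex and $C^2$; since $\alpha \geq 1$ the function $x\mapsto |x|^\alpha$ is already convex on $\mathbb{R}^d\setminus\{0\}$. A direct calculation of $\nabla V$ and $\mathrm{D}^2V$ gives, for $|x|\geq 1$,
\[
\langle \nabla V(x), b(t,x,y)\rangle + \tfrac{1}{2}\mathrm{tr}\bigl(\sigma^T(t,x,y)\, \mathrm{D}^2V(x)\, \sigma(t,x,y)\bigr) = \tfrac{\alpha}{2}|x|^{\alpha-4}\Bigl[|x|^2\bigl(2\langle x,b\rangle + |\sigma|^2\bigr) + (\alpha-2)|\sigma^T x|^2\Bigr],
\]
which the first hypothesis of case~1 bounds by $\tfrac{C\alpha}{2}(|x|^{\alpha-4}+|x|^\alpha)\leq C'V(x)$ (since $|x|^{\alpha-4}\leq |x|^\alpha$ on $|x|\geq 1$). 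On $|x|\leq 1$ the inequality \ref{H1} follows from local boundedness of $b,\sigma,\nabla V,\mathrm{D}^2V$. The polynomial growth assumption yields \ref{H2} via $(1+|x|^\alpha)(1+|y|^\alpha)\leq C\,V(x)V(y)$, and \ref{H3} follows from $\mathbb{E}|\xi|^{\alpha\vee 2} < \infty$.

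For case~2, I would take $V$ convex and $C^2$ with $V(x) = \exp(\alpha|x|^p)$ for $|x|\geq 1$; since $p\geq 1$ makes $r\mapsto \exp(\alpha r^p)$ convex, a smooth convex extension inside the unit ball exists. The analogous radial computation yields, for $|x|\geq 1$,
\[
\langle \nabla V(x), b\rangle + \tfrac{1}{2}\mathrm{tr}(\sigma^T \mathrm{D}^2V(x)\sigma) = \tfrac{\alpha p}{2}V(x)|x|^{p-4}\Bigl[|x|^2(2\langle x,b\rangle + |\sigma|^2) + (\alpha p|x|^p + p -2)|\sigma^T x|^2\Bigr],
\]
which the first hypothesis of case~2 bounds by $\tfrac{\alpha p C}{2}V(x)(|x|^{p-4}+1)\leq C'V(x)$ on $|x|\geq 1$. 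Condition \ref{H2} follows from the exponential growth bound and $\exp(\alpha|x|^p)\exp(\alpha|y|^p) = V(x)V(y)$, and \ref{H3} from $\mathbb{E}\exp(\alpha|\xi|^p) < \infty$.

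Finally, the structural alternative in Theorem \ref{theorem4-1} is either met directly by assumed continuity of $(x,y)\mapsto b(t,x,y),\sigma(t,x,y)$, or, in the remaining case, by symmetric uniform positive definiteness: if $\sigma$ is symmetric and $\inf_{|\lambda|=1}\lambda^T\sigma(t,x,y)\lambda\geq c>0$ uniformly, then for any probability measure $\mu$ the matrix $\int\sigma(t,x,y)\,\mu(\de y)$ is symmetric with smallest eigenvalue at least $c$, so
\[
\lambda^T\Bigl(\int\sigma\,\mu(\de y)\Bigr)\Bigl(\int\sigma^T\,\mu(\de y)\Bigr)\lambda = \Bigl|\Bigl(\int\sigma\,\mu(\de y)\Bigr)\lambda\Bigr|^2\geq c^2 > 0.
\]
The main delicate point is constructing the convex $C^2$ extension of $V$ near the origin (especially in case~2 with $p\in[1,2)$, where $\exp(\alpha|x|^p)$ fails to be $C^2$ at $0$, and in case~1 with $\alpha\in[1,2)$); everything else is a direct algebraic matching of the given one-sided inequality to the generator of $V$, plus the observation above for the non-degeneracy.
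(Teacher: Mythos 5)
Your proposal is correct and follows exactly the route the paper intends: the corollary is stated without proof, but it is plainly meant to be read off from Theorem \ref{theorem4-1} by taking $V$ convex and $C^2$ with $V(x)=1+\left\lvert x\right\rvert^{\alpha}$ (case 1) resp. $V(x)=\exp(\alpha\left\lvert x\right\rvert^{p})$ (case 2) for $\left\lvert x\right\rvert\geq 1$, precisely as in the parallel Example \ref{ex2.6}, and your generator computations, the reduction of \ref{H1}--\ref{H3} to the stated hypotheses (up to harmless rescaling of $V$ for \ref{H2}), and the passage from symmetric uniform positive definiteness of $\sigma$ to the non-degeneracy of $\int\sigma\,\mu(\de y)$ are all sound. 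The only point needing care, which you correctly identify, is the convex $C^2$ interpolation of $V$ inside the unit ball when $\alpha\in[1,2)$ or $p\in[1,2)$, and this is a routine radial construction.
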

\section*{Acknowledgments}
This work is  part of the first author's PhD thesis jointly at Sharif University of Technology and Technical University of Berlin under the supervision of
Professor Bijan Z. Zangeneh, Professor Michael Scheutzow and Professor Wilhelm Stannat. She wishes to thank her supervisors for their support, encouragement and guidance.

\bibliographystyle{plain}
\bibliography{biblio}
\end{document}